\newtheorem{theorem}{Theorem}[section]
\newtheorem{lemma}[theorem]{Lemma}
\newtheorem{corollary}[theorem]{Corollary}
\theoremstyle{definition}
\newtheorem{definition}[theorem]{Definition}
\theoremstyle{remark}
\newtheorem{remark}[theorem]{Remark}
\numberwithin{equation}{section}
\begin{document}

\setcounter{page}{1}

\title[An extension of the $a$-numerical radius on $C^*$-algebras]
{An extension of the $a$-numerical radius on $C^*$-algebras}

\author[M.~Mabrouk \MakeLowercase{and} A.~Zamani]
{Mohamed Mabrouk$^{1}$ \MakeLowercase{and} Ali Zamani$^{2,*}$}

\address{$^1$Faculty of Sciences of Sfax, Department of Mathematics, University of Sfax, Tunisia}
\email{mbs\_mabrouk@yahoo.fr}

\address{$^*$Corresponding author, $^2$School of Mathematics and Computer Sciences, Damghan University, P.O.BOX 36715-364, Damghan, Iran}
\email{zamani.ali85@yahoo.com}

\subjclass[2010]{47A12; 47A30; 46L05.}
\keywords{$C^*$-algebra; numerical range; numerical radius; inequality.}
\begin{abstract}
Let $a$ be a positive element in a unital $C^*$-algebra $\mathfrak{A}$.
We define a semi-norm on $\mathfrak{A}$, which generalizes the $a$-operator semi-norm and the $a$-numerical radius.
We investigate basic properties of this semi-norm and prove inequalities involving it.
Further, we derive new upper and lower bounds for the $a$-numerical radii of elements in $\mathfrak{A}$.
Some other related results are also discussed.
\end{abstract}
\maketitle
\section{Introduction and preliminaries}
Throughout this paper, let $\mathfrak{A}$ be a unital $C^*$-algebra with unit denoted by $\textbf{1}$
and we assume that $a\in\mathfrak{A}$ is a positive element.
Let $\mathbb{B}(\mathcal{H})$ be the $C^*$-algebra of all bounded linear operators on a complex Hilbert
space $\big(\mathcal{H}, \langle \cdot, \cdot\rangle\big)$.
Let $\mathcal{S}(\mathfrak{A})$ denote the set of states on $\mathfrak{A}$,
which is the set of all positive linear functionals $f$ on $\mathfrak{A}$ such that $\|f\|=1$.
By $V(x)$ we denote the (algebraic) numerical range of an element $x\in\mathfrak{A}$, that is, the set
$V(x)=\big\{f(x): f\in\mathcal{S}(\mathfrak{A})\big\}$.
This set generalizes the classical numerical range in the sense that the numerical range $V(T)$ of a
Hilbert space operator $T$ (considered as an element of a $C^*$-algebra $\mathbb{B}(\mathcal{H})$) coincides with the closure of
its classical numerical range $W(T) = \big\{\langle T\xi, \xi\rangle : \xi\in\mathcal{H}, \|\xi\| = 1\big\}$.
The numerical radius of $x\in\mathfrak{A}$ is defined as
$v(x)=\sup\big\{|\lambda|:\lambda\in V(x)\big\}$.
The study of the numerical range and numerical radius has a long and distinguished history, and has attracted the
attention of several authors; see for instance \cite{bonsall_duncan_1973, gustafson1996numerical, William}.

The numerical range and the numerical radius of operators have been
studied extensively over the last few decades as they are useful in studying and understanding
the role of operators in applications such as numerical analysis, physics and information theory,
differential equations and stability theory of difference approximations for hyperbolic initial-value problems,
see \cite{Axelsson.Lu.Polman, Goldberg.Tadmor, Maroulas.Psarrakos.Tsatsomeros, Psarrakos.Tsatsomeros},
and references therein. Also, the numerical radius is frequently employed as a more reliable
indicator of the rate of convergence of iterative methods than the spectral radius \cite{Axelsson.Lu.Polman, Eiermann}.
It should be mentioned here that the computation of the numerical radius is an optimization problem. For the relevance of the numerical
radius to numerical functional analysis and optimization, we refer the reader to \cite{Ostrovski, Uhlig}, and references therein.

There are many generalizations of the classical numerical range and numerical radius, and there has been a great
deal of interest in their systematic properties and applications;
see \cite{A.K.3, baklouti2018joint, Bo.Co, bourhim2017numerical, GOLDBERG19771, Li.1994, Raji.1, S.B.B.P, S.K.S,
Z.LAA, Zam.2019, Z.MIA, Z.W} and the references therein.

Recently, Bourhim and Mabrouk in \cite{B.M.Positivity}
introduced and studied $a$-numerical range and $a$-numerical radius of elements in $C^*$-algebras.
Also, the authors in \cite{Alahmari.Mabrouk.Zamani} continued the work on the $a$-numerical range and the $a$-numerical radius.
In particular, some ideas from the recent papers are extended.
Let us put $\mathcal{S}_{a}(\mathfrak{A})=\left\{\frac{f}{f(a)}: f\in\mathcal{S}(\mathfrak{A}),\, f(a)\neq0\right\}$.
This set is a non empty, convex and closed subset of the topological dual space of $\mathfrak{A}$,
and is compact if and only if $a$ is invertible in $\mathfrak{A}$;
see \cite[proposition~2.3]{B.M.Positivity}.
For an element $x\in\mathfrak{A}$, let
${\|x\|}_{a}=\sup \left\{\sqrt{f(x^*ax)}: f\in\mathcal{S}_{a}(\mathfrak{A})\right\}$.
Note that $\|\!\cdot\!\|_{\textbf{1}}=\|\!\cdot\!\|$ and $\|x\|_{a}=0$ if and only if $ax=0$.
Notice also that it may happen that $\|x\|_{a}=\infty$ for some $x\in\mathfrak{A}$
due to the lack of compactness of $\mathcal{S}_{a}(\mathfrak{A})$ (see \cite[Example~3.2]{B.M.Positivity}).
From now on we will denote
$\mathfrak{A}^{a}=\left\{x\in\mathfrak{A}: \, \|x\|_{a} <\infty \right\}$.
Observe that $\mathfrak{A}^{a} = \mathfrak{A}$ provided that
$a$ element in the center of $\mathfrak{A}$.
By \cite[Proposition~3.3]{B.M.Positivity} $\|\!\cdot\!\|_{a}$ is a semi-norm on $\mathfrak{A}^{a}$ and moreover
$\|xy\|_{a} \leq \|x\|_{a}\|y\|_{a}$ for all $x, y \in\mathfrak{A}^{a}$. Thus $\mathfrak{A}^{a}$ is a
subalgebra of $\mathfrak{A}$.
For $x\in\mathfrak{A}$, an element $x^{\sharp_{a}}\in\mathfrak{A}$ is called an $a$-adjoint of $x$ if $ax^{\sharp_{a}}=x^*a$.
The set of all elements in $\mathfrak{A}$ that admit $a$-adjoints is denoted by $\mathfrak{A}_{a}$.
Note that $\mathfrak{A}_{a} = \mathfrak{A}$ if $\mathfrak{A}$ is commutative but in general $\mathfrak{A}_{a} \neq \mathfrak{A}$.
Also, neither the existence nor the uniqueness of $a$-adjoint elements is guaranteed.
We also observe that $\mathfrak{A}_{a}$ is a subalgebra of $\mathfrak{A}$ which is neither closed nor dense in $\mathfrak{A}$;
see \cite{B.M.Positivity}. If $x\in\mathfrak{A}_{a}$ and $x^{\sharp_{a}}$ is an $a$-adjoint of it,
then by \cite[Corollary~4.9]{B.M.Positivity}
\begin{align}\label{I.00000}
{\left\|x\right\|}^2_{a} = {\left\|xx^{\sharp_{a}}\right\|}_{a} = {\left\|x^{\sharp_{a}}x\right\|}_{a}= {\left\|x^{\sharp_{a}}\right\|}^2_{a}.
\end{align}
An element $x\in\mathfrak{A}$ is said to be $a$-self-adjoint if $ax$ is self-adjoint, i.e., $ax = x^*a$.
We say that $x$ is $a$-positive if $ax$ is positive.
Every element $x$ in $\mathfrak{A}_{a}$ can be written as $x = y + i z$ where $y$ and $z$ are $a$-self-adjoint but, in general,
this decomposition is not unique. In fact if $x^{\sharp_{a}}$ is an $a$-adjoint of $x$, then
$x = \mathfrak{R}(x) +i\mathfrak{I}(x)$,
where $\mathfrak{R}(x) = \frac{x + x^{\sharp_{a}}}{2}$ and $\mathfrak{I}(x) = \frac{x - x^{\sharp_{a}}}{2i}$
are $a$-real and $a$-imaginary parts of $x$, respectively.
The $a$-numerical range and $a$-numerical radius of an element $x\in\mathfrak{A}$ are defined by
$V_{a}(x)=\left\{f(ax): f\in\mathcal{S}_{a}(\mathfrak{A})\right\}$
and $v_{a}(x)= \sup\left\{|\lambda|:\lambda\in V_{a}(x)\right\}$, respectively.
Unlike the classical algebraic numerical range, the $a$-numerical range $V_{a}(x)$ of an element $x\in\mathfrak{A}$ may or may not
be closed and/or may or may not be bounded.
Originally, these concepts were introduced in \cite{B.M.Positivity} as generalizations
of the $A$-numerical range and $A$-numerical radius for Hilbert space operator $T$ defined by
$W_{A}(T)=\left\{\langle AT\xi, \xi\rangle: \xi\in \mathcal{H}, \, {\|\xi\|}_{A} = 1 \right\}$
and
$w_{A}(T)= \sup\left\{|\lambda|:\lambda\in W_{A}(T)\right\}$, respectively.
Here, $A$ is a positive operator on $\mathcal{H}$ and ${\|\xi\|}_{A} = \sqrt{\langle A\xi, \xi\rangle}$ for all $\xi\in \mathcal{H}$.
In particular, if in the definitions of the $A$-numerical range and
$A$-numerical radius of $T$ we consider $A$ the identity operator on $\mathcal{H}$,
then we get the classical numerical range and numerical radius, respectively, i.e.,
$W_{A}(T) = W(T)$ and $w_{A}(T) = w(T)$.
An important and useful identity for the $a$-numerical radius (see \cite[Theorem~4.11]{B.M.Positivity}) is as follows:
\begin{align*}
v_{_{a}}(x)= \displaystyle{\sup_{\theta \in \mathbb{R}}}
\,{\left\|\mathfrak{R}(e^{i\theta}x)\right\|}_{a}.
\end{align*}
By \cite[Proposition~3.3 and Corollary 4.10]{B.M.Positivity}
observe that $v_{a}(\cdot)$ defines a semi-norm on ${\mathfrak{A}}_{a}$, which is equivalent to the $a$-operator
semi-norm ${\|\!\cdot\!\|}_{a}$. Namely, for $x\in{\mathfrak{A}}_{a}$, it holds that
\begin{align}\label{I.000009}
\frac{1}{2}{\|x\|}_{a} \leq v_{_{a}}(x) \leq {\|x\|}_{a}.
\end{align}
The first inequality becomes equality if $ax\neq 0$ and $ax^2 = 0$ (see Remark \ref{R.0.509}) and
the second inequality becomes equality if $x$ is $a$-self-adjoint (see, \cite[Corollary~4.6]{B.M.Positivity}).

In this paper, we first define a semi-norm on $\mathfrak{A}$,
which generalizes the $a$-operator semi-norm and the $a$-numerical radius
and prove some basic properties. Many inequalities
involving this semi-norm are given.
In addition, we present new upper and lower bounds for the $a$-numerical radii of elements in $\mathfrak{A}$.
These inequalities generalize known numerical radius inequalities.
Finally we define $a$-numerical index of $\mathfrak{A}$ and prove some results concerning it.
\section{An extension of the $a$-numerical radius on $C^*$-algebras}
In this section, we introduce our new semi-norm on ${\mathfrak{A}}_{a}$,
which generalizes the $a$-operator semi-norm and the $a$-numerical radius, and present basic properties
of this semi-norm and prove inequalities involving it.
First, let us define notions weighted $a$-real and $a$-imaginary parts of elements in ${\mathfrak{A}}_{a}$.
For $t, s \geq 0$ with $t+s>0$, we define the
weighted $a$-real and $a$-imaginary parts of $x\in {\mathfrak{A}}_{a}$ by $\mathfrak{R}_{_{(t, s)}}(x) = tx + sx^{\sharp_{a}}$
and $\mathfrak{I}_{_{(t, s)}}(x) = t(-ix) + s(-ix)^{\sharp_{a}}$, respectively. When $t = s = \frac{1}{2}$, we clearly have
$\mathfrak{R}_{_{(\frac{1}{2}, \frac{1}{2})}}(x) = \mathfrak{R}(x)$ and
$\mathfrak{I}_{_{(\frac{1}{2}, \frac{1}{2})}}(x) = \mathfrak{I}(x)$.
\begin{definition}\label{D.1.2}
Let $\mathfrak{A}$ be a unital $C^*$-algebra with the unit $\textbf{1}$ and let $a\in\mathfrak{A}$ be a positive element.
For $t, s \geq 0$ with $t+s>0$, the function $v_{_{(a, (t, s))}}(\cdot)\colon {\mathfrak{A}}_{a} \longrightarrow [0,+\infty)$
is defined as
\begin{align*}
v_{_{(a, (t, s))}}(x)= \displaystyle{\sup_{\theta \in \mathbb{R}}}
\,{\left\|\mathfrak{R}_{_{(t, s)}}(e^{i\theta}x)\right\|}_{a}.
\end{align*}
\end{definition}
\begin{remark}\label{R.2.2}
For $x\in {\mathfrak{A}}_{a}$,
it is easy to see that $v_{_{(a, (t, s))}}(x)= \displaystyle{\sup_{\theta \in \mathbb{R}}}
\,{\left\|\mathfrak{I}_{_{(t, s)}}(e^{i\theta}x)\right\|}_{a}$.
\end{remark}
\begin{remark}\label{R.4.2}
Obviously, $v_{_{(a, (1, 0))}}(x) = v_{_{(a, (0, 1))}}(x) = {\|x\|}_{a}$, and
$v_{_{(a, (\frac{1}{2}, \frac{1}{2}))}}(x) = v_{a}(x)$.
Hence $v_{_{(a, (t, s))}}(\cdot)$ generalizes the $a$-operator semi-norm ${\|\!\cdot\!\|}_{a}$ and the $a$-numerical radius $v_{a}(\cdot)$,
which have been introduced in \cite{B.M.Positivity}.
\end{remark}
\begin{remark}\label{R.3.2}
Let $\mathfrak{A} = \mathbb{B}(\mathcal{H})$ and let $0\leq \nu \leq 1$.
We have $v_{_{(I, (\nu, 1-\nu))}}(T) = \displaystyle{\sup_{\theta \in \mathbb{R}}}
\left\|\nu e^{i\theta} T + (1-\nu)(e^{i\theta}T)^*\right\|: = w_{_{\nu}}(T)$.
Thus $v_{_{(a, (t, s))}}(\cdot)$ also generalizes the weighted numerical radius $w_{_{\nu}}(\cdot)$,
which has been recently introduced in \cite{S.K.S}.
\end{remark}
Our first result reads as follows.
\begin{theorem}\label{t.70.2}
Let $\mathfrak{A}$ be a $C^*$-algebra and let $x\in {\mathfrak{A}}_{a}$. The following statements hold.
\begin{itemize}
\item[(i)] $v_{_{(a, (t, s))}}(x)= \displaystyle{\sup_{\alpha, \beta \in \mathbb{R}, \alpha^2 + \beta^2 = 1}}
\,{\left\|\alpha \,\mathfrak{R}_{_{(t, s)}}(x) + \beta \,\mathfrak{I}_{_{(t, s)}}(x)\right\|}_{a}$.
\item[(ii)] $v_{_{(a, (t, s))}}(x)= \frac{1}{2}\displaystyle{\sup_{\theta,\varphi \in \mathbb{R}}}
\,{\left\|\mathfrak{R}_{_{(t, s)}}\left((e^{i\theta}-ie^{i\varphi})x\right)\right\|}_{a}$.
\end{itemize}
\end{theorem}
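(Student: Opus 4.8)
The plan is to reduce both identities to a single algebraic fact describing how the weighted $a$-real part transforms under the rotation $x \mapsto e^{i\theta}x$. First I would fix one choice of $a$-adjoint $x^{\sharp_{a}}$ and record the scalar rules that will be used repeatedly: the element $e^{-i\theta}x^{\sharp_{a}}$ is an $a$-adjoint of $e^{i\theta}x$, since $a(e^{-i\theta}x^{\sharp_{a}}) = e^{-i\theta}x^*a = (e^{i\theta}x)^*a$, and likewise $(-ix)^{\sharp_{a}} = ix^{\sharp_{a}}$. Expanding $e^{i\theta} = \cos\theta + i\sin\theta$ in the definition of $\mathfrak{R}_{_{(t, s)}}$ and collecting the terms $tx+sx^{\sharp_{a}}$ and $tx-sx^{\sharp_{a}}$, together with the observation that $tx - sx^{\sharp_{a}} = i\,\mathfrak{I}_{_{(t, s)}}(x)$, yields the core identity
\begin{align*}
\mathfrak{R}_{_{(t, s)}}(e^{i\theta}x) = \cos\theta\,\mathfrak{R}_{_{(t, s)}}(x) - \sin\theta\,\mathfrak{I}_{_{(t, s)}}(x).
\end{align*}
This identity is the engine for both parts.

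For part (i), I would note that as $\theta$ ranges over $\mathbb{R}$ the pair $(\cos\theta, -\sin\theta)$ sweeps the entire unit circle $\{(\alpha, \beta) : \alpha^2 + \beta^2 = 1\}$, and conversely every such pair is realized by some $\theta$. Substituting $\alpha = \cos\theta$ and $\beta = -\sin\theta$ into the core identity therefore converts the supremum over $\theta$ in Definition~\ref{D.1.2} into the supremum over unit vectors $(\alpha, \beta)$, which is precisely the asserted expression.

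For part (ii), I would prove two inequalities. Since the map $x \mapsto \mathfrak{R}_{_{(t,s)}}(x) = tx + sx^{\sharp_{a}}$ is additive (the $a$-adjoint being additive in its argument), writing $-ie^{i\varphi} = e^{i(\varphi - \frac{\pi}{2})}$ and applying the triangle inequality gives
\begin{align*}
{\left\|\mathfrak{R}_{_{(t, s)}}\left((e^{i\theta} - ie^{i\varphi})x\right)\right\|}_{a}
\leq {\left\|\mathfrak{R}_{_{(t, s)}}(e^{i\theta}x)\right\|}_{a} + {\left\|\mathfrak{R}_{_{(t, s)}}(e^{i(\varphi - \frac{\pi}{2})}x)\right\|}_{a}.
\end{align*}
Each term on the right is at most $v_{_{(a, (t, s))}}(x)$ by the very definition of the supremum, so the left side is bounded by $2\,v_{_{(a, (t, s))}}(x)$, whence $\frac{1}{2}\sup_{\theta,\varphi}(\cdots) \leq v_{_{(a, (t, s))}}(x)$. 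For the reverse inequality I would specialize to $\varphi = \theta + \frac{\pi}{2}$, for which $-ie^{i\varphi} = e^{i\theta}$ and hence $e^{i\theta} - ie^{i\varphi} = 2e^{i\theta}$; additivity then gives $\mathfrak{R}_{_{(t, s)}}\big((e^{i\theta} - ie^{i\varphi})x\big) = 2\,\mathfrak{R}_{_{(t, s)}}(e^{i\theta}x)$, and taking the supremum over $\theta$ recovers $2\,v_{_{(a, (t, s))}}(x)$ as a lower bound for the double supremum. Combining the two bounds yields the claimed equality.

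I expect the only genuinely delicate point to be bookkeeping around the $a$-adjoint: because it need not be unique, I would fix one representative $x^{\sharp_{a}}$ at the outset and verify that the scalar rules $(e^{i\theta}x)^{\sharp_{a}} = e^{-i\theta}x^{\sharp_{a}}$ and $(-ix)^{\sharp_{a}} = ix^{\sharp_{a}}$, as well as the additivity of $\mathfrak{R}_{_{(t,s)}}(\cdot)$, are all consistent with that fixed choice. Once this consistency is in place, the remaining trigonometric manipulations and the covering argument for the unit circle are entirely routine.
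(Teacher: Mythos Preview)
Your proposal is correct and for part~(i) it coincides with the paper's argument verbatim: both derive the identity $\mathfrak{R}_{_{(t,s)}}(e^{i\theta}x)=\cos\theta\,\mathfrak{R}_{_{(t,s)}}(x)-\sin\theta\,\mathfrak{I}_{_{(t,s)}}(x)$ and invoke the bijection $\theta\mapsto(\cos\theta,-\sin\theta)$.

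For part~(ii) there is a small but genuine difference in the upper bound. The paper bounds ${\left\|\mathfrak{R}_{_{(t,s)}}\big((e^{i\theta}-ie^{i\varphi})x\big)\right\|}_{a}$ from above by $v_{_{(a,(t,s))}}\big((e^{i\theta}-ie^{i\varphi})x\big)$ and then uses absolute homogeneity of $v_{_{(a,(t,s))}}(\cdot)$ together with $\sup_{\theta,\varphi}|e^{i\theta}-ie^{i\varphi}|=2$. You instead split $e^{i\theta}-ie^{i\varphi}=e^{i\theta}+e^{i(\varphi-\pi/2)}$, use additivity of $\mathfrak{R}_{_{(t,s)}}$ and the triangle inequality, and bound each piece by $v_{_{(a,(t,s))}}(x)$ directly from the definition. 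Your route avoids appealing to homogeneity of the semi-norm (which in the paper is only formally recorded in the subsequent theorem) and is marginally more self-contained; the paper's route is slightly slicker in that it handles both inequalities in a single sandwich chain. The lower bound via the specialization $\varphi=\theta+\frac{\pi}{2}$ is the same idea in both proofs. Your remark about fixing a single representative $x^{\sharp_{a}}$ to ensure the scalar and additivity rules are consistent is a useful piece of hygiene that the paper leaves implicit.
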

\begin{proof}
(i) Let $\theta \in \mathbb{R}$. Put $\alpha = \cos \theta$ and $\beta = -\sin \theta$.
We have
\begin{align*}
\mathfrak{R}_{_{(t, s)}}(e^{i\theta}x) &= te^{i\theta}x + se^{-i\theta}x^{\sharp_{a}}
\\&= \cos \theta\left(tx + sx^{\sharp_{a}}\right) - \sin \theta\left(t(-ix) + s(-ix)^{\sharp_{a}}\right)
\\&= \alpha \,\mathfrak{R}_{_{(t, s)}}(x) + \beta \,\mathfrak{I}_{_{(t, s)}}(x).
\end{align*}
Therefore
\begin{align*}
\displaystyle{\sup_{\theta \in \mathbb{R}}}
\,{\left\|\mathfrak{R}_{_{(t, s)}}(e^{i\theta}x)\right\|}_{a}
= \displaystyle{\sup_{\alpha, \beta \in \mathbb{R}, \alpha^2 + \beta^2 = 1}}
\,{\left\|\alpha \,\mathfrak{R}_{_{(t, s)}}(x) + \beta \,\mathfrak{I}_{_{(t, s)}}(x)\right\|}_{a},
\end{align*}
and hence, by Definition \ref{D.1.2}, we obtain
$v_{_{(a, (t, s))}}(x)= \displaystyle{\sup_{\alpha, \beta \in \mathbb{R}, \alpha^2 + \beta^2 = 1}}
\,{\left\|\alpha \,\mathfrak{R}_{_{(t, s)}}(x) + \beta \,\mathfrak{I}_{_{(t, s)}}(x)\right\|}_{a}$.

(ii) We have
\begin{align*}
v_{_{(a, (t, s))}}(x)&= \frac{1}{2}\displaystyle{\sup_{\theta \in \mathbb{R}}}
\,{\left\|\mathfrak{R}_{_{(t, s)}}(e^{i\theta}x) + \mathfrak{R}_{_{(t, s)}}(e^{i\theta}x)\right\|}_{a}
\\& = \frac{1}{2}\displaystyle{\sup_{\theta \in \mathbb{R}}}
\,{\left\|\mathfrak{R}_{_{(t, s)}}(e^{i\theta}x) + \mathfrak{I}_{_{(t, s)}}(e^{i(\theta+\pi/2)}x)\right\|}_{a}
\\&\leq \frac{1}{2}\displaystyle{\sup_{\theta,\varphi \in \mathbb{R}}}
\,{\left\|\mathfrak{R}_{_{(t, s)}}(e^{i\theta}x) + \mathfrak{I}_{_{(t, s)}}(e^{i\varphi}x)\right\|}_{a}
\\&= \frac{1}{2}\displaystyle{\sup_{\theta,\varphi \in \mathbb{R}}}
\,{\left\|t\left(e^{i\theta} -ie^{i\varphi}\right)x + s\left(\left(e^{i\theta} -ie^{i\varphi}\right)x\right)^{\sharp_{a}}\right\|}_{a}
\\&= \frac{1}{2}\displaystyle{\sup_{\theta,\varphi \in \mathbb{R}}}
\,{\left\|\mathfrak{R}_{_{(t, s)}}\left(\left(e^{i\theta} -ie^{i\varphi}\right)x\right)\right\|}_{a}
\\&\leq \frac{1}{2}\displaystyle{\sup_{\theta,\varphi \in \mathbb{R}}}
\,v_{_{(a, (t, s))}}\left(\left(\left(e^{i\theta} -ie^{i\varphi}\right)x\right)\right)
\\&= \frac{1}{2}\displaystyle{\sup_{\theta,\varphi \in \mathbb{R}}}
\left|e^{i\theta} -ie^{i\varphi}\right|v_{_{(a, (t, s))}}(x)
\\&= \frac{v_{_{(a, (t, s))}}(x)}{2}\displaystyle{\sup_{\theta,\varphi \in \mathbb{R}}}
\sqrt{2-2\sin(\theta-\varphi)}
= v_{_{(a, (t, s))}}(x),
\end{align*}
and so $v_{_{(a, (t, s))}}(x)= \frac{1}{2}\displaystyle{\sup_{\theta,\varphi \in \mathbb{R}}}
\,{\left\|\mathfrak{R}_{_{(t, s)}}\left((e^{i\theta}-ie^{i\varphi})x\right)\right\|}_{a}$.
\end{proof}
In the following theorem, we show that $v_{_{(a, (t, s))}}(\cdot)$ is a semi-norm on ${\mathfrak{A}}_{a}$,
which is equivalent to the $a$-operator semi-norm ${\|\!\cdot\!\|}_{a}$.
\begin{theorem}\label{T.6.2}
Let $\mathfrak{A}$ be a $C^*$-algebra.
Then $v_{_{(a, (t, s))}}(\cdot)$ is a semi-norm on ${\mathfrak{A}}_{a}$
and for every $x\in {\mathfrak{A}}_{a}$ the following inequalities hold:
\begin{align*}
\max\{t, s\} {\|x\|}_{a}\leq v_{_{(a, (t, s))}}(x) \leq (t+s){\|x\|}_{a}.
\end{align*}
\end{theorem}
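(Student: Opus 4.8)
The plan is to verify the three ingredients separately: the semi-norm axioms, then the two displayed inequalities, with the lower bound as the real point. Two facts will be used throughout: the $a$-adjoint is conjugate-linear, so that $(e^{i\theta}x)^{\sharp_a}=e^{-i\theta}x^{\sharp_a}$, and $\|x^{\sharp_a}\|_a=\|x\|_a$ by \eqref{I.00000}. I would also remark at the outset that $x^{\sharp_a}$ need not be unique, but any two $a$-adjoints differ by an element of $a$-semi-norm zero, so every quantity appearing below is independent of the chosen adjoint.

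For the semi-norm axioms, nonnegativity is immediate. Since $\mathfrak{R}_{_{(t, s)}}$ is additive, $\mathfrak{R}_{_{(t, s)}}\big(e^{i\theta}(x+y)\big)=\mathfrak{R}_{_{(t, s)}}(e^{i\theta}x)+\mathfrak{R}_{_{(t, s)}}(e^{i\theta}y)$, so the triangle inequality for $\|\cdot\|_a$ followed by taking the supremum over $\theta$ yields subadditivity. For absolute homogeneity, writing $\lambda=re^{i\varphi}$ with $r\ge 0$ and using conjugate-linearity of $\sharp_a$, one finds $\mathfrak{R}_{_{(t, s)}}(e^{i\theta}\lambda x)=r\,\mathfrak{R}_{_{(t, s)}}\big(e^{i(\theta+\varphi)}x\big)$; since $\theta\mapsto\theta+\varphi$ is a bijection of $\mathbb{R}$ the supremum is unchanged, giving $v_{_{(a, (t, s))}}(\lambda x)=|\lambda|\,v_{_{(a, (t, s))}}(x)$.

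The upper bound is routine. For each $\theta\in\mathbb{R}$,
\begin{align*}
\big\|\mathfrak{R}_{_{(t, s)}}(e^{i\theta}x)\big\|_a
=\big\|te^{i\theta}x+se^{-i\theta}x^{\sharp_a}\big\|_a
\le t\|x\|_a+s\|x^{\sharp_a}\|_a
=(t+s)\|x\|_a,
\end{align*}
and taking the supremum over $\theta$ gives $v_{_{(a, (t, s))}}(x)\le(t+s)\|x\|_a$. In particular the supremum is finite on ${\mathfrak{A}}_a$, which justifies the stated codomain $[0,+\infty)$.

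The lower bound is the crux, and the idea is to extract a ``minus sign'' from Remark \ref{R.2.2}. A short computation, using $(-ix)^{\sharp_a}=ix^{\sharp_a}$, shows $\mathfrak{I}_{_{(t, s)}}(e^{i\theta}x)=-i\big(te^{i\theta}x-se^{-i\theta}x^{\sharp_a}\big)$; hence Remark \ref{R.2.2} gives $\big\|te^{i\theta}x-se^{-i\theta}x^{\sharp_a}\big\|_a\le v_{_{(a, (t, s))}}(x)$, while Definition \ref{D.1.2} gives $\big\|te^{i\theta}x+se^{-i\theta}x^{\sharp_a}\big\|_a\le v_{_{(a, (t, s))}}(x)$, for every $\theta$. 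Putting $y=te^{i\theta}x$ and $z=se^{-i\theta}x^{\sharp_a}$, both $\|y+z\|_a$ and $\|y-z\|_a$ are at most $v_{_{(a, (t, s))}}(x)$, so the elementary estimates $\|y\|_a\le\max\{\|y+z\|_a,\|y-z\|_a\}$ and $\|z\|_a\le\max\{\|y+z\|_a,\|y-z\|_a\}$ (each coming from $2\|y\|_a\le\|y+z\|_a+\|y-z\|_a$ and its analogue for $z$) yield $t\|x\|_a=\|y\|_a\le v_{_{(a, (t, s))}}(x)$ and $s\|x\|_a=\|z\|_a\le v_{_{(a, (t, s))}}(x)$, where $\|z\|_a=s\|x^{\sharp_a}\|_a=s\|x\|_a$. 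Taking the larger of the two constants gives $\max\{t,s\}\|x\|_a\le v_{_{(a, (t, s))}}(x)$. The only delicate point is recognising that the $\mathfrak{I}_{_{(t, s)}}$-description in Remark \ref{R.2.2} supplies exactly the minus-sign companion needed to feed this parallelogram-type inequality; everything else is a direct consequence of $\|\cdot\|_a$ being a semi-norm together with $\|x^{\sharp_a}\|_a=\|x\|_a$.
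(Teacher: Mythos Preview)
Your proof is correct and follows essentially the same route as the paper. For the lower bound you combine Definition~\ref{D.1.2} and Remark~\ref{R.2.2} to bound both $\|te^{i\theta}x+se^{-i\theta}x^{\sharp_a}\|_a$ and $\|te^{i\theta}x-se^{-i\theta}x^{\sharp_a}\|_a$ by $v_{_{(a,(t,s))}}(x)$, then use $2\|y\|_a\le\|y+z\|_a+\|y-z\|_a$; the paper does exactly this at $\theta=0$, writing the same triangle-inequality step as $\|(tx+sx^{\sharp_a})+i(-itx+isx^{\sharp_a})\|_a=2t\|x\|_a$. The only difference is cosmetic: you spell out the semi-norm axioms explicitly, whereas the paper defers them to \cite[Theorem~1]{A.K.3}.
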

\begin{proof}
The proof that $v_{_{(a, (t, s))}}(\cdot)$ is a semi-norm on ${\mathfrak{A}}_{a}$
is so similar to that of \cite[Theorem~1]{A.K.3} that we omit it.

Now, let $x\in {\mathfrak{A}}_{a}$. By taking $\theta = 0$
in Definition~\ref{D.1.2} and Remark~\ref{R.2.2}, we deduce that
\begin{align}\label{I.1.T.6.28}
v_{_{(a, (t, s))}}(x) \geq {\left\|tx + sx^{\sharp_{a}}\right\|}_{a}
\quad \mbox{and} \quad  v_{_{(a, (t, s))}}(x)\geq {\left\|-itx + isx^{\sharp_{a}}\right\|}_{a}.
\end{align}
Thus, by \eqref{I.1.T.6.28} and the triangle inequality for the semi-norm ${\|\!\cdot\!\|}_{a}$, we have
\begin{align*}
v_{_{(a, (t, s))}}(x) &\geq \frac{{\left\|tx + sx^{\sharp_{a}}\right\|}_{a} + {\left\|-itx + isx^{\sharp_{a}}\right\|}_{a}}{2}
\\& \geq \frac{{\left\|(tx + sx^{\sharp_{a}}) + i(-itx + isx^{\sharp_{a}})\right\|}_{a}}{2}
= t{\|x\|}_{a},
\end{align*}
and hence
\begin{align}\label{I.2.T.6.29}
t{\|x\|}_{a} \leq v_{_{(a, (t, s))}}(x).
\end{align}
By a similar argument, we obtain
\begin{align}\label{I.5.T.6.2}
s{\|x\|}_{a} \leq v_{_{(a, (t, s))}}(x).
\end{align}
Now, \eqref{I.2.T.6.29} and \eqref{I.5.T.6.2} yield that
\begin{align}\label{I.6.T.6.2}
\max\{t, s\} {\|x\|}_{a}\leq v_{_{(a, (t, s))}}(x).
\end{align}
Furthermore, by the triangle inequality for the semi-norm ${\|\!\cdot\!\|}_{a}$ and \eqref{I.00000}, we have
\begin{align*}
v_{_{(a, (t, s))}}(x)&= \displaystyle{\sup_{\theta \in \mathbb{R}}}
\,{\left\|te^{i\theta}x + se^{-i\theta}x^{\sharp_{a}}\right\|}_{a}.
\\& \leq \displaystyle{\sup_{\theta \in \mathbb{R}}}
\left(t{\left\|x\right\|}_{a} + s{\left\|x^{\sharp_{a}}\right\|}_{a}\right)
= t{\left\|x\right\|}_{a} + s{\left\|x\right\|}_{a},
\end{align*}
and so
\begin{align}\label{I.7.T.6.2}
v_{_{(a, (t, s))}}(x)\leq (t+s){\left\|x\right\|}_{a}.
\end{align}
From \eqref{I.6.T.6.2} and \eqref{I.7.T.6.2}, we deduce the desired result.
\end{proof}
\begin{remark}\label{P.7.2}
For $x\in {\mathfrak{A}}_{a}$, by \eqref{I.00000}, we have
\begin{align*}
v_{_{(a, (t, s))}}(x^{\sharp_{a}})&= \displaystyle{\sup_{\theta \in \mathbb{R}}}
\,{\left\|te^{i\theta}x^{\sharp_{a}} + se^{-i\theta}(x^{\sharp_{a}})^{\sharp_{a}}\right\|}_{a}
\\&= \displaystyle{\sup_{\theta \in \mathbb{R}}}
\,{\left\|\left(te^{-i\theta}x + se^{i\theta}x^{\sharp_{a}}\right)^{\sharp_{a}}\right\|}_{a}
\\& = \displaystyle{\sup_{\theta \in \mathbb{R}}}
\,{\left\|te^{-i\theta}x + se^{i\theta}x^{\sharp_{a}}\right\|}_{a}
= v_{_{(a, (t, s))}}(x),
\end{align*}
and hence $v_{_{(a, (t, s))}}(x^{\sharp_{a}}) = v_{_{(a, (t, s))}}(x)$.
\end{remark}
In the following result, we give a condition equivalent to $v_{_{(a, (t, s))}}(x)=\max\{t, s\} {\|x\|}_{a}$.
\begin{theorem}\label{t.71.2}
Let $\mathfrak{A}$ be a $C^*$-algebra and let $x\in {\mathfrak{A}}_{a}$. The following conditions are equivalent:
\begin{itemize}
\item[(i)] ${\left\|\mathfrak{R}_{_{(t, s)}}(e^{i\theta}x)\right\|}_{a} = \max\{t, s\} {\|x\|}_{a}$ for all $\theta \in \mathbb{R}$.
\item[(ii)] $v_{_{(a, (t, s))}}(x)=\max\{t, s\} {\|x\|}_{a}$.
\end{itemize}
\end{theorem}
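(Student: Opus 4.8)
The plan is to treat the two implications separately, with the forward direction essentially immediate and the reverse direction carrying all the content. For the implication (i) $\Rightarrow$ (ii): if ${\|\mathfrak{R}_{_{(t, s)}}(e^{i\theta}x)\|}_{a} = \max\{t, s\}{\|x\|}_{a}$ holds for every $\theta \in \mathbb{R}$, then taking the supremum over $\theta$ and appealing to Definition~\ref{D.1.2} immediately gives $v_{_{(a, (t, s))}}(x) = \max\{t, s\}{\|x\|}_{a}$.

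For (ii) $\Rightarrow$ (i) I would fix $\theta \in \mathbb{R}$ and establish the desired equality by proving two matching bounds. The upper bound ${\|\mathfrak{R}_{_{(t, s)}}(e^{i\theta}x)\|}_{a} \leq v_{_{(a, (t, s))}}(x) = \max\{t, s\}{\|x\|}_{a}$ is immediate from the definition of $v_{_{(a, (t, s))}}$ as a supremum over $\theta$. The real work is the reverse inequality, and here I would exploit the two algebraic identities
\begin{align*}
\mathfrak{R}_{_{(t, s)}}(e^{i\theta}x) + i\,\mathfrak{I}_{_{(t, s)}}(e^{i\theta}x) = 2te^{i\theta}x, \qquad \mathfrak{R}_{_{(t, s)}}(e^{i\theta}x) - i\,\mathfrak{I}_{_{(t, s)}}(e^{i\theta}x) = 2se^{-i\theta}x^{\sharp_{a}},
\end{align*}
which follow by expanding the weighted $a$-real and $a$-imaginary parts together with the relation $(e^{i\theta}x)^{\sharp_{a}} = e^{-i\theta}x^{\sharp_{a}}$.

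Applying the triangle inequality for ${\|\!\cdot\!\|}_{a}$ to each identity, using the scalar homogeneity ${\|2te^{i\theta}x\|}_{a} = 2t{\|x\|}_{a}$ and ${\|2se^{-i\theta}x^{\sharp_{a}}\|}_{a} = 2s{\|x\|}_{a}$ (the latter via \eqref{I.00000}), and invoking the uniform bound ${\|\mathfrak{I}_{_{(t, s)}}(e^{i\theta}x)\|}_{a} \leq v_{_{(a, (t, s))}}(x) = \max\{t, s\}{\|x\|}_{a}$ supplied by Remark~\ref{R.2.2}, I obtain
\begin{align*}
2t{\|x\|}_{a} \leq {\|\mathfrak{R}_{_{(t, s)}}(e^{i\theta}x)\|}_{a} + \max\{t, s\}{\|x\|}_{a}, \qquad 2s{\|x\|}_{a} \leq {\|\mathfrak{R}_{_{(t, s)}}(e^{i\theta}x)\|}_{a} + \max\{t, s\}{\|x\|}_{a}.
\end{align*}
Selecting whichever of these corresponds to $\max\{t, s\}$—the first if $t \geq s$ and the second if $s \geq t$—yields $\max\{t, s\}{\|x\|}_{a} \leq {\|\mathfrak{R}_{_{(t, s)}}(e^{i\theta}x)\|}_{a}$, which combined with the upper bound gives equality for each $\theta$, as required.

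The step I expect to be the crux is recognizing that the supremal bound on the $a$-imaginary part furnished by Remark~\ref{R.2.2} is precisely what forces the $a$-real part to attain its maximal value at every individual $\theta$; once the two identities above are in hand, the remainder is a routine application of the triangle inequality and the homogeneity of ${\|\!\cdot\!\|}_{a}$.
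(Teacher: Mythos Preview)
Your proof is correct. Both your argument and the paper's ultimately rest on the same algebraic identity $\mathfrak{R}_{_{(t, s)}}(e^{i\theta}x) + i\,\mathfrak{I}_{_{(t, s)}}(e^{i\theta}x) = 2te^{i\theta}x$ (together with its companion for $s$), combined with the triangle inequality and the bound ${\|\mathfrak{I}_{_{(t, s)}}(e^{i\theta}x)\|}_{a} \leq v_{_{(a, (t, s))}}(x)$ from Remark~\ref{R.2.2}. The difference is packaging: the paper wraps this core step inside a chain of max/absolute-value identities (using $\max\{\alpha,\beta\} = \tfrac{\alpha+\beta}{2} + \tfrac{|\alpha-\beta|}{2}$) to first derive the auxiliary inequalities
\begin{align*}
t{\|x\|}_{a} + \tfrac{1}{2}\left|\max\{t, s\}{\|x\|}_{a} - {\left\|\mathfrak{R}_{_{(t, s)}}(e^{i\theta}x)\right\|}_{a}\right| \leq \max\{t, s\}{\|x\|}_{a}
\end{align*}
(and the analogue with $s$), and then squeezes the absolute-value term to zero. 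Your route bypasses this detour and goes straight from the identity to the lower bound, which is both shorter and more transparent; the paper's extra machinery does not appear to yield anything beyond the final equality.
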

\begin{proof}
(i)$\Rightarrow$(ii) The implication is trivial.

(ii)$\Rightarrow$(i) Suppose (ii) holds.
Let $\theta\in \mathbb{R}$.
By Definition \ref{D.1.2}, Remark \ref{R.2.2} and \eqref{I.6.T.6.2} we have
\begin{align*}
\max\{t, s\} {\|x\|}_{a} & = v_{_{(a, (t, s))}}(x)
\\& \geq \frac{1}{2}\max\left\{\max\{t, s\} {\|x\|}_{a}, {\left\|\mathfrak{R}_{_{(t, s)}}(e^{i\theta}x)\right\|}_{a}\right\}
\\& \qquad \qquad + \frac{1}{2}\max\left\{\max\{t, s\} {\|x\|}_{a}, {\left\|\mathfrak{I}_{_{(t, s)}}(e^{i\theta}x)\right\|}_{a}\right\}
\\& \geq \frac{1}{4}\left(2\max\{t, s\} {\|x\|}_{a} + {\left\|\mathfrak{R}_{_{(t, s)}}(e^{i\theta}x)\right\|}_{a}
+ {\left\|\mathfrak{I}_{_{(t, s)}}(e^{i\theta}x)\right\|}_{a}\right)
\\& \qquad \qquad + \frac{1}{4}\left|\max\{t, s\} {\|x\|}_{a}- {\left\|\mathfrak{R}_{_{(t, s)}}(e^{i\theta}x)\right\|}_{a}\right|
\\& \geq \frac{1}{2}\max\{t, s\} {\|x\|}_{a}
+ \frac{1}{4} {\left\|\mathfrak{R}_{_{(t, s)}}(e^{i\theta}x) + i\mathfrak{I}_{_{(t, s)}}(e^{i\theta}x)\right\|}_{a}
\\& \qquad \qquad + \frac{1}{4}\left|\max\{t, s\} {\|x\|}_{a} - {\left\|\mathfrak{R}_{_{(t, s)}}(e^{i\theta}x)\right\|}_{a}\right|
\\& = \frac{1}{2}\max\{t, s\} {\|x\|}_{a} + \frac{t}{2}{\|x\|}_{a}
+ \frac{1}{4}\left|\max\{t, s\} {\|x\|}_{a} - {\left\|\mathfrak{R}_{_{(t, s)}}(e^{i\theta}x)\right\|}_{a}\right|,
\end{align*}
and so
\begin{align}\label{I.1.t.71.2}
t{\|x\|}_{a} + \frac{1}{2}\left|\max\{t, s\} {\|x\|}_{a} - {\left\|\mathfrak{R}_{_{(t, s)}}(e^{i\theta}x)\right\|}_{a}\right|
\leq \max\{t, s\} {\|x\|}_{a}.
\end{align}
Further, by a similar argument, we have
\begin{align}\label{I.2.t.71.2}
s{\|x\|}_{a} + \frac{1}{2}\left|\max\{t, s\} {\|x\|}_{a} - {\left\|\mathfrak{R}_{_{(t, s)}}(e^{i\theta}x)\right\|}_{a}\right|
\leq \max\{t, s\} {\|x\|}_{a}.
\end{align}
Utilizing \eqref{I.1.t.71.2} and \eqref{I.2.t.71.2} we obtain
\begin{align*}
\max\{t, s\} {\|x\|}_{a} + \frac{1}{2}\left|\max\{t, s\} {\|x\|}_{a} - {\left\|\mathfrak{R}_{_{(t, s)}}(e^{i\theta}x)\right\|}_{a}\right|
\leq \max\{t, s\} {\|x\|}_{a},
\end{align*}
and hence ${\left\|\mathfrak{R}_{_{(t, s)}}(e^{i\theta}x)\right\|}_{a} = \max\{t, s\} {\|x\|}_{a}$.
\end{proof}
In the following theorem, a refinement of the inequality \eqref{I.7.T.6.2} is given.
\begin{theorem}\label{T.2.3}
Let $\mathfrak{A}$ be a $C^*$-algebra and let $x\in {\mathfrak{A}}_{a}$. Then
\begin{align*}
v_{_{(a, (t, s))}}(x)\leq \sqrt{(t^2+ s^2){\|x\|}^2_{a} + 2ts\,v_{_{a}}\left(x^2\right)}
\leq (t+s){\|x\|}_{a}.
\end{align*}
\end{theorem}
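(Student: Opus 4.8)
The plan is to fix $\theta \in \mathbb{R}$ and bound the single quantity ${\left\|\mathfrak{R}_{_{(t, s)}}(e^{i\theta}x)\right\|}_{a}$ from above by the constant $\sqrt{(t^2+ s^2){\|x\|}^2_{a} + 2ts\,v_{_{a}}(x^2)}$, which does not depend on $\theta$; taking the supremum over $\theta$ then gives the first inequality by Definition \ref{D.1.2}. Write $w = e^{i\theta}x$, so that $w^{\sharp_{a}} = e^{-i\theta}x^{\sharp_{a}}$ and $y := \mathfrak{R}_{_{(t, s)}}(w) = tw + sw^{\sharp_{a}}$. The starting point is the identity \eqref{I.00000}, namely ${\|y\|}^2_{a} = {\|y\,y^{\sharp_{a}}\|}_{a}$; one checks directly from $aw^{\sharp_{a}} = w^*a$ that $tw^{\sharp_{a}} + sw$ is an $a$-adjoint of $y$, so this is the expression to use for $y^{\sharp_{a}}$.

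The core computation is to expand
\[
y\,y^{\sharp_{a}} = (tw + sw^{\sharp_{a}})(tw^{\sharp_{a}} + sw)
= t^2\,ww^{\sharp_{a}} + s^2\,w^{\sharp_{a}}w + ts\big(w^2 + (w^{\sharp_{a}})^2\big).
\]
Applying the triangle inequality for ${\|\!\cdot\!\|}_{a}$ together with the identities ${\|ww^{\sharp_{a}}\|}_{a} = {\|w\|}^2_{a} = {\|x\|}^2_{a}$ and ${\|w^{\sharp_{a}}w\|}_{a} = {\|w^{\sharp_{a}}\|}^2_{a} = {\|x\|}^2_{a}$ from \eqref{I.00000} (using ${\|e^{i\theta}x\|}_{a} = {\|x\|}_{a}$), the problem reduces to controlling the cross term ${\|w^2 + (w^{\sharp_{a}})^2\|}_{a}$. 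The key observation---which I expect to be the crux of the argument---is that, since $(x^2)^{\sharp_{a}} = (x^{\sharp_{a}})^2$, one has
\[
w^2 + (w^{\sharp_{a}})^2 = e^{2i\theta}x^2 + e^{-2i\theta}(x^2)^{\sharp_{a}} = 2\,\mathfrak{R}(e^{2i\theta}x^2),
\]
so that ${\|w^2 + (w^{\sharp_{a}})^2\|}_{a} = 2{\|\mathfrak{R}(e^{2i\theta}x^2)\|}_{a} \leq 2v_{_{a}}(x^2)$ by the identity $v_{_{a}}(y) = \sup_{\varphi} {\|\mathfrak{R}(e^{i\varphi}y)\|}_{a}$ recalled in the introduction, noting that $2\theta$ runs over $\mathbb{R}$ as $\theta$ does. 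Combining these estimates yields ${\|\mathfrak{R}_{_{(t, s)}}(e^{i\theta}x)\|}^2_{a} \leq (t^2+s^2){\|x\|}^2_{a} + 2ts\,v_{_{a}}(x^2)$ for every $\theta$, and passing to the supremum establishes the first inequality.

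For the second inequality it suffices to prove $v_{_{a}}(x^2) \leq {\|x\|}^2_{a}$, since then $(t^2+s^2){\|x\|}^2_{a} + 2ts\,v_{_{a}}(x^2) \leq (t^2 + 2ts + s^2){\|x\|}^2_{a} = (t+s)^2{\|x\|}^2_{a}$, and taking square roots finishes the proof. This bound follows at once from the right-hand inequality in \eqref{I.000009} applied to $x^2$, giving $v_{_{a}}(x^2) \leq {\|x^2\|}_{a}$, combined with the submultiplicativity ${\|x^2\|}_{a} \leq {\|x\|}^2_{a}$ of the $a$-operator semi-norm recorded in the introduction. The only delicacy throughout is the possible non-uniqueness of $a$-adjoints, but this causes no trouble since every estimate is phrased through the semi-norm ${\|\!\cdot\!\|}_{a}$ and is therefore insensitive to the particular choice of adjoint.
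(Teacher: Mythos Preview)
Your proof is correct and follows essentially the same strategy as the paper's: square via \eqref{I.00000}, expand $y\,y^{\sharp_a}$, apply the triangle inequality, and bound the cross term by $2v_{a}(x^2)$. Your choice of $a$-adjoint $y^{\sharp_a} = tw^{\sharp_a} + sw$ is in fact slightly cleaner than the paper's choice $te^{-i\theta}x^{\sharp_a} + se^{i\theta}(x^{\sharp_a})^{\sharp_a}$, which produces the cross term $e^{2i\theta}x(x^{\sharp_a})^{\sharp_a} + e^{-2i\theta}(x^{\sharp_a})^2$ and then requires one additional step---passing to its $a$-adjoint $e^{2i\theta}x^2 + e^{-2i\theta}(x^2)^{\sharp_a}$ via ${\|z\|}_a = {\|z^{\sharp_a}\|}_a$---to reach the same bound.
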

\begin{proof}
For every $\theta\in \mathbb{R}$, since $ax^{\sharp_{a}} = x^*a$, it is easy to see that
\begin{align*}
a\big(e^{2i\theta}x^2 + e^{-2i\theta}(x^2)^{\sharp_{a}}\big)
= \big(e^{2i\theta}x\big(x^{\sharp_{a}}\big)^{\sharp_{a}} + e^{-2i\theta}(x^{\sharp_{a}})^2\big)^*a.
\end{align*}
Thus $e^{2i\theta}x^2 + e^{-2i\theta}(x^2)^{\sharp_{a}}$ is an $a$-adjoint of $e^{2i\theta}x\big(x^{\sharp_{a}}\big)^{\sharp_{a}} + e^{-2i\theta}(x^{\sharp_{a}})^2$.
So, by \eqref{I.00000} and the triangle inequality for the semi-norm ${\|\!\cdot\!\|}_{a}$, we have
\begin{align*}
v_{_{(a, (t, s))}}^2(x)&= \displaystyle{\sup_{\theta \in \mathbb{R}}}
\,{\left\|te^{i\theta}x + se^{-i\theta}x^{\sharp_{a}}\right\|}^2_{a}
\\& = \displaystyle{\sup_{\theta \in \mathbb{R}}}
\,{\left\|\big(te^{i\theta}x + se^{-i\theta}x^{\sharp_{a}}\big)\big(te^{i\theta}x + se^{-i\theta}x^{\sharp_{a}}\big)^{\sharp_{a}}\right\|}_{a}
\\& = \displaystyle{\sup_{\theta \in \mathbb{R}}}
\,{\left\|t^2xx^{\sharp_{a}} + s^2x^{\sharp_{a}}\big(x^{\sharp_{a}}\big)^{\sharp_{a}} + ts\big(e^{2i\theta}x\big(x^{\sharp_{a}}\big)^{\sharp_{a}} + e^{-2i\theta}(x^{\sharp_{a}})^2\big)\right\|}_{a}
\\& \leq \displaystyle{\sup_{\theta \in \mathbb{R}}}
\,\left({\left\|t^2xx^{\sharp_{a}} + s^2x^{\sharp_{a}}\big(x^{\sharp_{a}}\big)^{\sharp_{a}}\right\|}_{a}
+ ts{\left\|e^{2i\theta}x\big(x^{\sharp_{a}}\big)^{\sharp_{a}} + e^{-2i\theta}(x^{\sharp_{a}})^2\right\|}_{a}\right)
\\& \leq {\left\|t^2xx^{\sharp_{a}} + s^2x^{\sharp_{a}}\big(x^{\sharp_{a}}\big)^{\sharp_{a}}\right\|}_{a}
+ ts\,\displaystyle{\sup_{\theta \in \mathbb{R}}}
\,{\left\|e^{2i\theta}x\big(x^{\sharp_{a}}\big)^{\sharp_{a}} + e^{-2i\theta}(x^{\sharp_{a}})^2\right\|}_{a}
\\& = {\left\|t^2xx^{\sharp_{a}} + s^2x^{\sharp_{a}}\big(x^{\sharp_{a}}\big)^{\sharp_{a}}\right\|}_{a}
+ ts\,\displaystyle{\sup_{\theta \in \mathbb{R}}}
\,{\left\|\left(e^{2i\theta}x\big(x^{\sharp_{a}}\big)^{\sharp_{a}} + e^{-2i\theta}(x^{\sharp_{a}})^2\right)^{\sharp_{a}}\right\|}_{a}
\\& = {\left\|t^2xx^{\sharp_{a}} + s^2x^{\sharp_{a}}\big(x^{\sharp_{a}}\big)^{\sharp_{a}}\right\|}_{a}
+ ts\,\displaystyle{\sup_{\theta \in \mathbb{R}}}
\,{\left\|e^{2i\theta}x^2 + e^{-2i\theta}(x^2)^{\sharp_{a}}\right\|}_{a}
\\& = {\left\|t^2xx^{\sharp_{a}} + s^2x^{\sharp_{a}}\big(x^{\sharp_{a}}\big)^{\sharp_{a}}\right\|}_{a}
+ 2ts\,v_{_{a}}\left(x^2\right)
\\& \leq t^2{\left\|xx^{\sharp_{a}}\right\|}_{a} + s^2{\left\|x^{\sharp_{a}}\big(x^{\sharp_{a}}\big)^{\sharp_{a}}\right\|}_{a}
+ 2ts\,v_{_{a}}\left(x^2\right)
\\& = (t^2+ s^2){\|x\|}^2_{a} + 2ts\,v_{_{a}}\left(x^2\right),
\end{align*}
and hence
\begin{align}\label{I.1.T.2.3}
v_{_{(a, (t, s))}}^2(x)\leq (t^2+ s^2){\|x\|}^2_{a} + 2ts\,v_{_{a}}\left(x^2\right).
\end{align}
Furthermore, since $v_{_{a}}\left(x^2\right)\leq {\|x^2\|}_{a}\leq {\|x\|}^2_{a}$, we have
\begin{align}\label{I.2.T.2.3}
(t^2+ s^2){\|x\|}^2_{a} + 2ts\,v_{_{a}}\left(x^2\right) \leq (t+s)^2{\|x\|}^2_{a}.
\end{align}
Now, from \eqref{I.1.T.2.3} and \eqref{I.2.T.2.3}, we deduce the desired result.
\end{proof}
\begin{corollary}\label{C.2.4}
Let $\mathfrak{A}$ be a $C^*$-algebra.
If $x\in {\mathfrak{A}}_{a}$ is such that $v_{_{(a, (t, s))}}(x) = (t+s){\|x\|}_{a}$, then
${\|x^2\|}_{a} = {\|x\|}^2_{a}$.
\end{corollary}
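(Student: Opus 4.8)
The plan is to extract the conclusion directly from the chain of inequalities already proved in Theorem~\ref{T.2.3}. For every $x\in {\mathfrak{A}}_{a}$ that theorem gives
\begin{align*}
v_{_{(a, (t, s))}}(x)\leq \sqrt{(t^2+ s^2){\|x\|}^2_{a} + 2ts\,v_{_{a}}\left(x^2\right)}
\leq (t+s){\|x\|}_{a}.
\end{align*}
Under the hypothesis $v_{_{(a, (t, s))}}(x) = (t+s){\|x\|}_{a}$, the two outer terms of this chain coincide, so every intermediate quantity must equal $(t+s){\|x\|}_{a}$ as well. In particular the radicand must satisfy
\begin{align*}
(t^2+ s^2){\|x\|}^2_{a} + 2ts\,v_{_{a}}\left(x^2\right) = (t+s)^2{\|x\|}^2_{a}.
\end{align*}

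The next step is a one-line simplification. Expanding $(t+s)^2 = t^2 + 2ts + s^2$ and cancelling $(t^2+s^2){\|x\|}^2_{a}$ from both sides reduces the displayed equality to $2ts\,v_{_{a}}\left(x^2\right) = 2ts{\|x\|}^2_{a}$. Assuming $t, s > 0$, so that $ts \neq 0$, I divide by $2ts$ to obtain $v_{_{a}}\left(x^2\right) = {\|x\|}^2_{a}$.

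Finally, I would invoke the comparison $v_{_{a}}\left(x^2\right)\leq {\|x^2\|}_{a}\leq {\|x\|}^2_{a}$ that is already recorded in the proof of Theorem~\ref{T.2.3} (the first inequality coming from \eqref{I.000009} applied to $x^2$, the second from submultiplicativity of ${\|\!\cdot\!\|}_{a}$). Together with $v_{_{a}}\left(x^2\right) = {\|x\|}^2_{a}$, this squeezes ${\|x^2\|}_{a}$ between ${\|x\|}^2_{a}$ and ${\|x\|}^2_{a}$, forcing ${\|x^2\|}_{a} = {\|x\|}^2_{a}$, which is the assertion.

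The argument is short; the only genuine subtlety, and the point I would flag, is the degenerate case $ts = 0$. If, say, $s = 0$ and $t > 0$, then $\mathfrak{R}_{_{(t, 0)}}(e^{i\theta}x) = te^{i\theta}x$ gives $v_{_{(a, (t, 0))}}(x) = t{\|x\|}_{a} = (t+s){\|x\|}_{a}$ for \emph{every} $x\in{\mathfrak{A}}_{a}$, while ${\|x^2\|}_{a} = {\|x\|}^2_{a}$ may fail. Hence the hypothesis is vacuous and the conclusion is genuinely false in this boundary regime, so the corollary is to be understood with $t, s > 0$; I would either assume this throughout or state the restriction explicitly, since the substance of the result lives entirely in the non-degenerate case handled above.
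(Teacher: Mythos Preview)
Your proof is correct and follows essentially the same route as the paper's: both squeeze via the chain of inequalities in Theorem~\ref{T.2.3}, the only cosmetic difference being that the paper inserts ${\|x^2\|}_{a}$ directly into the chain $(t+s)^2{\|x\|}^2_{a}\leq (t^2+s^2){\|x\|}^2_{a}+2ts\,v_a(x^2)\leq (t^2+s^2){\|x\|}^2_{a}+2ts{\|x^2\|}_{a}\leq (t+s)^2{\|x\|}^2_{a}$ rather than first isolating $v_a(x^2)={\|x\|}^2_{a}$. Your remark on the degenerate case $ts=0$ is well taken---the paper's argument also tacitly requires $ts\neq 0$ to reach the conclusion.
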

\begin{proof}
Let $x\in {\mathfrak{A}}_{a}$ such that $v_{_{(a, (t, s))}}(x) = (t+s){\|x\|}_{a}$.
By Theorem \ref{T.2.3} we have
\begin{align*}
(t+s)^2{\|x\|}^2_{a} &\leq (t^2+ s^2){\|x\|}^2_{a} + 2ts\,v_{_{a}}\left(x^2\right)
\\& \leq (t^2+ s^2){\|x\|}^2_{a} + 2ts{\|x^2\|}_{a}
\\& \leq (t^2+ s^2){\|x\|}^2_{a} + 2ts{\|x\|}^2_{a} = (t+s)^2{\|x\|}^2_{a},
\end{align*}
which implies ${\|x^2\|}_{a} = {\|x\|}^2_{a}$.
\end{proof}
Our next result extends and refines an inequality for the numerical radius of Hilbert space
operators obtained by Kittaneh in \cite{K.2005}.
\begin{theorem}\label{T.5.3}
Let $\mathfrak{A}$ be a $C^*$-algebra and let $x\in {\mathfrak{A}}_{a}$. Then
\begin{align*}
ts {\left\|xx^{\sharp_{a}} + x^{\sharp_{a}}x\right\|}_{a}
+ \frac{1}{2}\displaystyle{\sup_{\theta \in \mathbb{R}}}\left|{\left\|\mathfrak{R}_{_{(t, s)}}(e^{i\theta}x)\right\|}^2_{a} -
{\left\|\mathfrak{I}_{_{(t, s)}}(e^{i\theta}x)\right\|}^2_{a}\right|
\leq v_{_{(a, (t, s))}}^2(x).
\end{align*}
\end{theorem}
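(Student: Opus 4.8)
The plan is to carry out, in the weighted $a$-adjoint setting, the standard refinement of Kittaneh's lower bound for the numerical radius. The one genuinely new feature is that for $t\neq s$ the element $\mathfrak{R}_{_{(t, s)}}(e^{i\theta}x)$ is \emph{not} $a$-self-adjoint, so it cannot simply be squared; the remedy is to use the identity \eqref{I.00000} in both of its forms ${\|z\|}^2_{a}={\|zz^{\sharp_{a}}\|}_{a}={\|z^{\sharp_{a}}z\|}_{a}$. Fix $\theta\in\mathbb{R}$ and write $B_\theta=\mathfrak{R}_{_{(t, s)}}(e^{i\theta}x)=te^{i\theta}x+se^{-i\theta}x^{\sharp_{a}}$ and $C_\theta=\mathfrak{I}_{_{(t, s)}}(e^{i\theta}x)$. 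By Definition~\ref{D.1.2} and Remark~\ref{R.2.2} both ${\|B_\theta\|}_{a}$ and ${\|C_\theta\|}_{a}$ are at most $v_{_{(a, (t, s))}}(x)$, so
\[
v_{_{(a, (t, s))}}^2(x)\geq\max\big\{{\|B_\theta\|}^2_{a},{\|C_\theta\|}^2_{a}\big\}=\frac{{\|B_\theta\|}^2_{a}+{\|C_\theta\|}^2_{a}}{2}+\frac12\big|{\|B_\theta\|}^2_{a}-{\|C_\theta\|}^2_{a}\big|.
\]
Since the last term is precisely the quantity under the supremum in the statement, the whole theorem reduces to the uniform estimate $\tfrac12({\|B_\theta\|}^2_{a}+{\|C_\theta\|}^2_{a})\geq ts\,{\|xx^{\sharp_{a}}+x^{\sharp_{a}}x\|}_{a}$.

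To obtain this I would first observe that, since $(x^{\sharp_{a}})^{*}a=ax$ (take adjoints in $ax^{\sharp_{a}}=x^{*}a$), the element $te^{-i\theta}x^{\sharp_{a}}+se^{i\theta}x$ is a valid $a$-adjoint of $B_\theta$. Using it, direct multiplication gives
\[
B_\theta B_\theta^{\sharp_{a}}=t^2xx^{\sharp_{a}}+s^2x^{\sharp_{a}}x+ts\big(e^{2i\theta}x^2+e^{-2i\theta}(x^{\sharp_{a}})^2\big),
\]
\[
B_\theta^{\sharp_{a}}B_\theta=t^2x^{\sharp_{a}}x+s^2xx^{\sharp_{a}}+ts\big(e^{2i\theta}x^2+e^{-2i\theta}(x^{\sharp_{a}})^2\big).
\]
Because $C_\theta=\mathfrak{R}_{_{(t, s)}}(e^{i(\theta-\pi/2)}x)$, replacing $\theta$ by $\theta-\pi/2$ reverses the sign of the $\theta$-dependent bracket, whence $B_\theta B_\theta^{\sharp_{a}}+C_\theta C_\theta^{\sharp_{a}}=2(t^2xx^{\sharp_{a}}+s^2x^{\sharp_{a}}x)$ and $B_\theta^{\sharp_{a}}B_\theta+C_\theta^{\sharp_{a}}C_\theta=2(t^2x^{\sharp_{a}}x+s^2xx^{\sharp_{a}})$; the cross terms cancel, which is exactly what turns these identities into a clean lower bound.

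Applying \eqref{I.00000} together with the triangle inequality to each of the two pairings yields $\tfrac12({\|B_\theta\|}^2_{a}+{\|C_\theta\|}^2_{a})\geq{\|t^2xx^{\sharp_{a}}+s^2x^{\sharp_{a}}x\|}_{a}$ and $\tfrac12({\|B_\theta\|}^2_{a}+{\|C_\theta\|}^2_{a})\geq{\|s^2xx^{\sharp_{a}}+t^2x^{\sharp_{a}}x\|}_{a}$. Adding these two inequalities and using the triangle inequality once more,
\[
{\|B_\theta\|}^2_{a}+{\|C_\theta\|}^2_{a}\geq{\|t^2xx^{\sharp_{a}}+s^2x^{\sharp_{a}}x\|}_{a}+{\|s^2xx^{\sharp_{a}}+t^2x^{\sharp_{a}}x\|}_{a}\geq(t^2+s^2){\|xx^{\sharp_{a}}+x^{\sharp_{a}}x\|}_{a}.
\]
Since $t^2+s^2\geq2ts$, this gives the required estimate, and inserting it into the first display and taking the supremum over $\theta$ (the first summand being independent of $\theta$) completes the proof. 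I expect the main obstacle to be precisely this weighted asymmetry: whereas in the self-adjoint case $B_\theta B_\theta^{\sharp_{a}}$ alone produces $xx^{\sharp_{a}}+x^{\sharp_{a}}x$ with equal weights, here $B_\theta B_\theta^{\sharp_{a}}$ and $B_\theta^{\sharp_{a}}B_\theta$ carry the unequal weight pairs $(t^2,s^2)$ and $(s^2,t^2)$, so one must use both products and symmetrize, absorbing the discrepancy into the gap $t^2+s^2\geq2ts$; a secondary point needing care is the justification of the chosen $a$-adjoint of $B_\theta$, which rests on $ax=(x^{\sharp_{a}})^{*}a$.
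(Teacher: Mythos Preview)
Your proof is correct, but it takes a different (and somewhat more elaborate) route than the paper's, and the reason you give for the detour is a misconception.

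The paper's argument begins exactly as yours does, with $v_{_{(a,(t,s))}}^2(x)\geq\tfrac12\big({\|B_\theta\|}^2_{a}+{\|C_\theta\|}^2_{a}\big)+\tfrac12\big|{\|B_\theta\|}^2_{a}-{\|C_\theta\|}^2_{a}\big|$. For the first summand it simply observes the purely algebraic identity $B_\theta^{2}+C_\theta^{2}=2ts\big(xx^{\sharp_{a}}+x^{\sharp_{a}}x\big)$, then uses submultiplicativity ${\|z^2\|}_{a}\leq{\|z\|}^2_{a}$ and the triangle inequality to get $\tfrac12({\|B_\theta\|}^2_{a}+{\|C_\theta\|}^2_{a})\geq\tfrac12{\|B_\theta^{2}+C_\theta^{2}\|}_{a}=ts\,{\|xx^{\sharp_{a}}+x^{\sharp_{a}}x\|}_{a}$. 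No $a$-self-adjointness of $B_\theta$ is needed here: the step ${\|B_\theta\|}^2_{a}\geq{\|B_\theta^{2}\|}_{a}$ is just submultiplicativity of ${\|\cdot\|}_{a}$, so your concern that ``$B_\theta$ cannot simply be squared'' is unfounded, and your workaround via \eqref{I.00000} and the products $B_\theta B_\theta^{\sharp_{a}}$, $B_\theta^{\sharp_{a}}B_\theta$ is not forced.

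That said, the extra work is not wasted. Your symmetrization actually yields the constant $\tfrac{t^2+s^2}{2}$ in place of $ts$, which is strictly larger when $t\neq s$; only at the very end do you discard this gain via $t^2+s^2\geq 2ts$. So your argument in fact proves a sharper inequality than the one stated, while the paper's direct squaring hits the coefficient $ts$ exactly and more quickly.
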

\begin{proof}
Let $\theta\in \mathbb{R}$.
By Definition \ref{D.1.2} and Remark \ref{R.2.2} we have
\begin{align}\label{F.1.T.5.3}
v_{_{(a, (t, s))}}(x)\geq \max\left\{{\left\|\mathfrak{R}_{_{(t, s)}}(e^{i\theta}x)\right\|}_{a}, {\left\|\mathfrak{I}_{_{(t, s)}}(e^{i\theta}x)\right\|}_{a}\right\}.
\end{align}
Further, it is easy to see that
\begin{align}\label{F.2.T.5.3}
\mathfrak{R}_{_{(t, s)}}^2(e^{i\theta}x) + \mathfrak{I}_{_{(t, s)}}^2(e^{i\theta}x) = 2ts\big(xx^{\sharp_{a}} + x^{\sharp_{a}}x\big).
\end{align}
By \eqref{F.1.T.5.3} and \eqref{F.2.T.5.3} we have
\begin{align*}
v_{_{(a, (t, s))}}^2(x) &\geq \max\left\{{\left\|\mathfrak{R}_{_{(t, s)}}(e^{i\theta}x)\right\|}^2_{a},
{\left\|\mathfrak{I}_{_{(t, s)}}(e^{i\theta}x)\right\|}^2_{a}\right\}
\\& = \frac{{\left\|\mathfrak{R}_{_{(t, s)}}(e^{i\theta}x)\right\|}^2_{a} +
{\left\|\mathfrak{I}_{_{(t, s)}}(e^{i\theta}x)\right\|}^2_{a}}{2}
+ \frac{\left|{\left\|\mathfrak{R}_{_{(t, s)}}(e^{i\theta}x)\right\|}^2_{a} -
{\left\|\mathfrak{I}_{_{(t, s)}}(e^{i\theta}x)\right\|}^2_{a}\right|}{2}
\\& \geq \frac{{\left\|\mathfrak{R}_{_{(t, s)}}^2(e^{i\theta}x)\right\|}_{a} +
{\left\|\mathfrak{I}_{_{(t, s)}}^2(e^{i\theta}x)\right\|}_{a}}{2}
+ \frac{\left|{\left\|\mathfrak{R}_{_{(t, s)}}(e^{i\theta}x)\right\|}^2_{a} -
{\left\|\mathfrak{I}_{_{(t, s)}}(e^{i\theta}x)\right\|}^2_{a}\right|}{2}
\\& \geq \frac{{\left\|\mathfrak{R}_{_{(t, s)}}^2(e^{i\theta}x) + \mathfrak{I}_{_{(t, s)}}^2(e^{i\theta}x)\right\|}_{a}}{2}
+ \frac{\left|{\left\|\mathfrak{R}_{_{(t, s)}}(e^{i\theta}x)\right\|}^2_{a} -
{\left\|\mathfrak{I}_{_{(t, s)}}(e^{i\theta}x)\right\|}^2_{a}\right|}{2}
\\& = ts {\left\|xx^{\sharp_{a}} + x^{\sharp_{a}}x\right\|}_{a}
+ \frac{\left|{\left\|\mathfrak{R}_{_{(t, s)}}(e^{i\theta}x)\right\|}^2_{a} -
{\left\|\mathfrak{I}_{_{(t, s)}}(e^{i\theta}x)\right\|}^2_{a}\right|}{2}.
\end{align*}
Thus
\begin{align*}
v_{_{(a, (t, s))}}^2(x) \geq ts {\left\|xx^{\sharp_{a}} + x^{\sharp_{a}}x\right\|}_{a}
+ \frac{\left|{\left\|\mathfrak{R}_{_{(t, s)}}(e^{i\theta}x)\right\|}^2_{a} -
{\left\|\mathfrak{I}_{_{(t, s)}}(e^{i\theta}x)\right\|}^2_{a}\right|}{2}.
\end{align*}
Taking the supremum over $\theta\in \mathbb{R}$ in the above inequality, we deduce the desired result.
\end{proof}
\section{Upper and lower bounds for the $a$-numerical radius}
In this section, we derive new upper and lower bounds for the $a$-numerical radii of elements in $C^*$-algebras.
We first establish a considerable improvement of inequality $v_{a}(x) \leq {\|x\|}_{a}$.
\begin{theorem}\label{T.0.3}
Let $\mathfrak{A}$ be a $C^*$-algebra and let $x\in{\mathfrak{A}}_{a}$. Then
\begingroup\makeatletter\def\f@size{10}\check@mathfonts
\begin{align*}
v^4_{a}(x) \leq \frac{1}{4}v^2_{a}(x^2)
+ \frac{1}{8}v_{a}\Big(x^2(xx^{\sharp_{a}} + x^{\sharp_{a}}x) + (xx^{\sharp_{a}} + x^{\sharp_{a}}x)x^2\Big)
+ \frac{1}{16}{\Big\|(xx^{\sharp_{a}} + x^{\sharp_{a}}x)^2\Big\|}_{a}.
\end{align*}
\endgroup
\end{theorem}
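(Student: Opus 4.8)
The plan is to iterate the fundamental identity $v_{a}(x)=\sup_{\theta\in\mathbb{R}}{\|\mathfrak{R}(e^{i\theta}x)\|}_{a}$ twice, exploiting that every $\mathfrak{R}(e^{i\theta}y)$ is $a$-self-adjoint and that for an $a$-self-adjoint element $h$ the relation \eqref{I.00000} gives ${\|h^{2}\|}_{a}={\|h\|}^{2}_{a}$ (an $a$-self-adjoint $h$ is its own $a$-adjoint, so $\|h\|_a^2=\|hh^{\sharp_a}\|_a=\|h^2\|_a$). Throughout I write $S=xx^{\sharp_{a}}+x^{\sharp_{a}}x$, which is $a$-self-adjoint, and I use the $a$-adjoint calculus $(uv)^{\sharp_{a}}=v^{\sharp_{a}}u^{\sharp_{a}}$, $(x^{\sharp_{a}})^{\sharp_{a}}=x$, and $(x^{2})^{\sharp_{a}}=(x^{\sharp_{a}})^{2}$; these hold on the subalgebra $\mathfrak{A}_{a}$ and, since ${\|\cdot\|}_{a}$ and $v_{a}(\cdot)$ only see $a(\cdot)$, are insensitive to the choice of $a$-adjoint.

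First I would square inside the semi-norm. A direct expansion of $\big(\tfrac{1}{2}(e^{i\theta}x+e^{-i\theta}x^{\sharp_{a}})\big)^{2}$, in which the cross terms collapse to $\tfrac14 S$ and $(x^{2})^{\sharp_{a}}=(x^{\sharp_{a}})^{2}$, yields the key identity
\begin{align*}
\mathfrak{R}(e^{i\theta}x)^{2}=\tfrac{1}{2}\mathfrak{R}(e^{2i\theta}x^{2})+\tfrac{1}{4}S.
\end{align*}
Applying ${\|h^{2}\|}_{a}={\|h\|}^{2}_{a}$ with $h=\mathfrak{R}(e^{i\theta}x)$ gives $v_{a}^{2}(x)=\sup_{\theta}{\|\tfrac{1}{2}\mathfrak{R}(e^{2i\theta}x^{2})+\tfrac{1}{4}S\|}_{a}$. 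Since the element inside is again $a$-self-adjoint, squaring once more and invoking the same power identity produces
\begin{align*}
v_{a}^{4}(x)=\sup_{\theta\in\mathbb{R}}{\left\|\tfrac{1}{4}\mathfrak{R}(e^{2i\theta}x^{2})^{2}+\tfrac{1}{8}\big(\mathfrak{R}(e^{2i\theta}x^{2})S+S\,\mathfrak{R}(e^{2i\theta}x^{2})\big)+\tfrac{1}{16}S^{2}\right\|}_{a}.
\end{align*}
The triangle inequality together with the subadditivity of the supremum over a sum splits this into three pieces. For the first, $\mathfrak{R}(e^{2i\theta}x^{2})$ is $a$-self-adjoint, so after the reparametrization $\varphi=2\theta$ one gets $\sup_{\theta}{\|\mathfrak{R}(e^{2i\theta}x^{2})^{2}\|}_{a}=\big(\sup_{\varphi}{\|\mathfrak{R}(e^{i\varphi}x^{2})\|}_{a}\big)^{2}=v_{a}^{2}(x^{2})$; the third is the constant ${\|S^{2}\|}_{a}$.

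The main obstacle — and the crux of the argument — is the middle, cross term. I would verify the identity $\mathfrak{R}(e^{2i\theta}x^{2})S+S\,\mathfrak{R}(e^{2i\theta}x^{2})=\mathfrak{R}(e^{2i\theta}W)$, where $W=x^{2}S+Sx^{2}$ is precisely the argument appearing in the statement. This rests on recognizing, via the product rule for $\sharp_{a}$ and the $a$-self-adjointness of $S$, that $W^{\sharp_{a}}=S(x^{2})^{\sharp_{a}}+(x^{2})^{\sharp_{a}}S$, whence $\mathfrak{R}(e^{2i\theta}x^{2})S+S\,\mathfrak{R}(e^{2i\theta}x^{2})=\tfrac{1}{2}\big(e^{2i\theta}W+e^{-2i\theta}W^{\sharp_{a}}\big)=\mathfrak{R}(e^{2i\theta}W)$. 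Taking the supremum gives $\sup_{\theta}{\|\mathfrak{R}(e^{2i\theta}x^{2})S+S\,\mathfrak{R}(e^{2i\theta}x^{2})\|}_{a}=v_{a}(W)$. Substituting the three evaluations yields
\begin{align*}
v_{a}^{4}(x)\leq\tfrac{1}{4}v_{a}^{2}(x^{2})+\tfrac{1}{8}v_{a}(W)+\tfrac{1}{16}{\|S^{2}\|}_{a},
\end{align*}
which is the claimed bound. The only delicate points are the $a$-adjoint bookkeeping (well-definedness modulo the kernel of $a(\cdot)$, since two $a$-adjoints differ by an element of $a$-semi-norm zero) and correctly tracking that doubling the angle leaves the suprema defining $v_{a}(x^{2})$ and $v_{a}(W)$ unchanged.
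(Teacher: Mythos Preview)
Your proof is correct and follows essentially the same approach as the paper's: both compute $\mathfrak{R}(e^{i\theta}x)^{4}$ by first squaring to obtain $\tfrac{1}{2}\mathfrak{R}(e^{2i\theta}x^{2})+\tfrac{1}{4}S$ and then squaring again, apply ${\|h\|}_{a}^{4}={\|h^{4}\|}_{a}$ for $a$-self-adjoint $h$, split by the triangle inequality, and identify the cross term as the $a$-real part of $W=x^{2}S+Sx^{2}$ via the relation $W^{\sharp_{a}}=(x^{\sharp_{a}})^{2}S+S(x^{\sharp_{a}})^{2}$. The only difference is cosmetic: the paper expands the fourth power in one step and writes out the $a$-adjoint verifications explicitly, whereas you package everything through the $\mathfrak{R}(\cdot)$ notation and a two-step squaring.
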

\begin{proof}
Let $\theta\in \mathbb{R}$. Easy computations show that $\frac{e^{i\theta}x + e^{-i\theta}x^{\sharp_{a}}}{2}$
and $\left(\frac{e^{i\theta}x + e^{-i\theta}x^{\sharp_{a}}}{2}\right)^2$
are $a$-self-adjoint and so, by \cite[Corollary 4.9]{B.M.Positivity}, we get
\begingroup\makeatletter\def\f@size{10}\check@mathfonts
\begin{align}\label{F.1.T.0.3}
{\left\|\frac{e^{i\theta}x + e^{-i\theta}x^{\sharp_{a}}}{2}\right\|}^4_{a} =
{\left\|\left(\frac{e^{i\theta}x + e^{-i\theta}x^{\sharp_{a}}}{2}\right)^4\right\|}_{a}.
\end{align}
\endgroup
Put $M := xx^{\sharp_{a}} + x^{\sharp_{a}}x$. Since $ax^{\sharp_{a}} = x^*a$, it is easy to see that $a(x^{\sharp_{a}})^2 = (x^2)^*a$ and
$a\big(M(x^{\sharp_{a}})^2 + (x^{\sharp_{a}})^2M\big) = \big(Mx^2 + x^2M\big)^*a$.
Thus $(x^{\sharp_{a}})^2$ is an $a$-adjoint of $x^2$ and $(x^{\sharp_{a}})^2M + M(x^{\sharp_{a}})^2$ is an $a$-adjoint of $x^2M + Mx^2$.
We have
\begingroup\makeatletter\def\f@size{10}\check@mathfonts
\begin{align*}
&16\left(\frac{e^{i\theta}x + e^{-i\theta}x^{\sharp_{a}}}{2}\right)^4
= \Big(e^{2i\theta}x^2 + e^{-2i\theta}(x^{\sharp_{a}})^2 + M\Big)^2
\\& = 4\left(\frac{e^{2i\theta}x^2 + e^{-2i\theta}(x^{\sharp_{a}})^2}{2}\right)^2
+ 2\left(\frac{e^{2i\theta}\big(x^2M + Mx^2\big) + e^{-2i\theta}\big((x^{\sharp_{a}})^2M + M(x^{\sharp_{a}})^2\big)}{2}\right) + M^2,
\end{align*}
\endgroup
and therefore by \eqref{F.1.T.0.3},
\begingroup\makeatletter\def\f@size{10}\check@mathfonts
\begin{align*}
&16{\left\|\frac{e^{i\theta}x + e^{-i\theta}x^{\sharp_{a}}}{2}\right\|}^4_{a} = 16{\left\|\left(\frac{e^{i\theta}x + e^{-i\theta}x^{\sharp_{a}}}{2}\right)^4\right\|}_{a}
\\& = {\left\|4\left(\frac{e^{2i\theta}x^2 + e^{-2i\theta}(x^{\sharp_{a}})^2}{2}\right)^2
+ 2\left(\frac{e^{2i\theta}\big(x^2M + Mx^2\big) + e^{-2i\theta}\big((x^{\sharp_{a}})^2M + M(x^{\sharp_{a}})^2\big)}{2}\right) + M^2\right\|}_{a}
\\& \leq 4{\left\|\left(\frac{e^{2i\theta}x^2 + e^{-2i\theta}(x^{\sharp_{a}})^2}{2}\right)^2\right\|}_{a}
+ 2{\left\|\frac{e^{2i\theta}\big(x^2M + Mx^2\big) + e^{-2i\theta}\big((x^{\sharp_{a}})^2M + M(x^{\sharp_{a}})^2\big)}{2}\right\|}_{a}
+ {\left\|M^2\right\|}_{a}
\\& = 4{\left\|\frac{e^{2i\theta}x^2 + e^{-2i\theta}(x^{\sharp_{a}})^2}{2}\right\|}^2_{a}
+ 2{\left\|\frac{e^{2i\theta}\big(x^2M + Mx^2\big) + e^{-2i\theta}\big((x^{\sharp_{a}})^2M + M(x^{\sharp_{a}})^2\big)}{2}\right\|}_{a}
+ {\left\|M^2\right\|}_{a}
\\&\qquad \qquad \qquad \qquad \qquad \qquad \qquad \qquad \qquad \qquad
\Big(\mbox{since $\frac{e^{2i\theta}x^2 + e^{-2i\theta}(x^{\sharp_{a}})^2}{2}$ is $a$-self-adjoint}\Big)
\\& \leq 4v^2_{a}(x^2) + 2 v_{a}\Big(x^2M + Mx^2\Big) + {\left\|M^2\right\|}_{a}.
\end{align*}
\endgroup
Hence
\begingroup\makeatletter\def\f@size{10}\check@mathfonts
\begin{align}\label{F.2.T.0.3}
16{\left\|\frac{e^{i\theta}x + e^{-i\theta}x^{\sharp_{a}}}{2}\right\|}^4_{a} \leq 4v^2_{a}(x^2) + 2 v_{a}\Big(x^2M + Mx^2\Big) + {\left\|M^2\right\|}_{a}.
\end{align}
\endgroup
Taking the supremum over $\theta\in \mathbb{R}$ in \eqref{F.2.T.0.3}, we deduce that
\begingroup\makeatletter\def\f@size{10}\check@mathfonts
\begin{align*}
16v^4_{a}(x) \leq 4v^2_{a}(x^2) + 2 v_{a}\big(x^2M + Mx^2\big) + {\left\|M^2\right\|}_{a}.
\end{align*}
\endgroup
\end{proof}
\begin{remark}
From Theorem \ref{T.0.3}, \eqref{I.000009} and \eqref{I.00000} we have
\begingroup\makeatletter\def\f@size{10}\check@mathfonts
\begin{align*}
v^4_{a}(x) &\leq \frac{1}{4}{\|x^2\|}^2_{a}
+ \frac{1}{8}{\Big\|x^2(xx^{\sharp_{a}} + x^{\sharp_{a}}x) + (xx^{\sharp_{a}} + x^{\sharp_{a}}x)x^2\Big\|}_{a}
+ \frac{1}{16}{\Big\|xx^{\sharp_{a}} + x^{\sharp_{a}}x\Big\|}^2_{a}
\\& \leq \frac{1}{4}{\|x\|}^4_{a}
+ \frac{1}{8}\Big({\|x^2\|}_{a}{\|xx^{\sharp_{a}} + x^{\sharp_{a}}x\|}_{a} + {\|xx^{\sharp_{a}} + x^{\sharp_{a}}x\|}_{a}{\|x^2\|}_{a}\Big)
+ \frac{1}{16}({\|xx^{\sharp_{a}}\|}_{a} + {\|x^{\sharp_{a}}x\|}_{a})^2
\\& \leq \frac{1}{4}{\|x\|}^4_{a}
+ \frac{1}{8}\Big({\|x\|}^2_{a}({\|xx^{\sharp_{a}}\|}_{a} + {\|x^{\sharp_{a}}x\|}_{a}) + ({\|xx^{\sharp_{a}}\|}_{a} + {\|x^{\sharp_{a}}x\|}_{a}){\|x\|}^2_{a}\Big)
+ \frac{1}{16}(2{\|x\|}^2_{a})^2
\\& = \frac{1}{4}{\|x\|}^4_{a}
+ \frac{1}{8}\Big(2{\|x\|}^4_{a} + 2{\|x\|}^4_{a}\Big)
+ \frac{1}{4}{\|x\|}^4_{a} = {\|x\|}^4_{a}.
\end{align*}
\endgroup
Therefore, Theorem \ref{T.0.3} is a more precise estimate of the $a$-numerical radius $v_{a}(x) \leq {\|x\|}_{a}$.
\end{remark}
\begin{theorem}\label{C.00.3}
Let $\mathfrak{A}$ be a $C^*$-algebra and let $x\in{\mathfrak{A}}_{a}$. Then
\begin{align*}
v^2_{a}(x) \leq \frac{1}{2}v_{a}(x^2) + \frac{1}{4}{\Big\|xx^{\sharp_{a}} + x^{\sharp_{a}}x\Big\|}_{a}.
\end{align*}
\end{theorem}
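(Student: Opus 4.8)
The plan is to reuse the opening moves from the proof of Theorem~\ref{T.0.3}, but to stop at the second power instead of passing to the fourth. The starting point is the identity $v_{a}(x) = \sup_{\theta \in \mathbb{R}} {\|\mathfrak{R}(e^{i\theta}x)\|}_{a}$ recalled in the introduction, in which $\mathfrak{R}(e^{i\theta}x) = \frac{e^{i\theta}x + e^{-i\theta}x^{\sharp_{a}}}{2}$. For each fixed $\theta$ this element is $a$-self-adjoint, so \cite[Corollary~4.9]{B.M.Positivity} yields ${\|\mathfrak{R}(e^{i\theta}x)\|}_{a}^{2} = {\|\mathfrak{R}(e^{i\theta}x)^{2}\|}_{a}$, which reduces the problem to controlling the semi-norm of the square.

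Next I would expand that square. Setting $M := xx^{\sharp_{a}} + x^{\sharp_{a}}x$, a direct computation gives
\[
4\,\mathfrak{R}(e^{i\theta}x)^{2} = \big(e^{i\theta}x + e^{-i\theta}x^{\sharp_{a}}\big)^{2} = e^{2i\theta}x^{2} + e^{-2i\theta}(x^{\sharp_{a}})^{2} + M.
\]
The key observation, already established in the proof of Theorem~\ref{T.0.3}, is that $a(x^{\sharp_{a}})^{2} = (x^{2})^{*}a$, so $(x^{\sharp_{a}})^{2}$ is an $a$-adjoint of $x^{2}$. Hence the first two terms on the right are precisely twice the $a$-real part of $e^{2i\theta}x^{2}$, namely $\frac{e^{2i\theta}x^{2} + e^{-2i\theta}(x^{\sharp_{a}})^{2}}{2} = \mathfrak{R}(e^{2i\theta}x^{2})$.

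With this identification, the triangle inequality for ${\|\cdot\|}_{a}$ gives
\[
4\,{\|\mathfrak{R}(e^{i\theta}x)\|}_{a}^{2} \leq 2\,{\|\mathfrak{R}(e^{2i\theta}x^{2})\|}_{a} + {\|M\|}_{a} \leq 2\,v_{a}(x^{2}) + {\|M\|}_{a},
\]
where the final inequality uses ${\|\mathfrak{R}(e^{2i\theta}x^{2})\|}_{a} \leq \sup_{\varphi \in \mathbb{R}} {\|\mathfrak{R}(e^{i\varphi}x^{2})\|}_{a} = v_{a}(x^{2})$. Taking the supremum over $\theta \in \mathbb{R}$ on the left-hand side and dividing by $4$ then yields $v_{a}^{2}(x) \leq \frac{1}{2}v_{a}(x^{2}) + \frac{1}{4}{\|M\|}_{a}$, which is the assertion.

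I do not anticipate a serious obstacle; the only step needing care is recognizing the cross term $e^{2i\theta}x^{2} + e^{-2i\theta}(x^{\sharp_{a}})^{2}$ as $2\,\mathfrak{R}(e^{2i\theta}x^{2})$. This is exactly what allows it to be bounded uniformly in $\theta$ by the sharper quantity $v_{a}(x^{2})$ rather than by the coarser bound ${\|x^{2}\|}_{a}$, and it is the reason $v_{a}(x^{2})$ appears in the final estimate.
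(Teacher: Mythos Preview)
Your argument is correct and is in fact more direct than the paper's. The paper does not stop at the square: it first establishes Theorem~\ref{T.0.3}, the fourth-power bound
\[
v_a^4(x) \leq \tfrac{1}{4}v_a^2(x^2) + \tfrac{1}{8}v_a\big(x^2M+Mx^2\big) + \tfrac{1}{16}\|M^2\|_a,
\]
then proves the auxiliary estimate $v_a(x^2M+Mx^2) \leq 2v_a(x^2)\|M\|_a$, and finally observes that the resulting right-hand side factors as $\big(\tfrac{1}{2}v_a(x^2)+\tfrac{1}{4}\|M\|_a\big)^2$. Your route---square the $a$-self-adjoint element $\mathfrak{R}(e^{i\theta}x)$, split off $M$ by the triangle inequality, and recognize the remaining term as $2\mathfrak{R}(e^{2i\theta}x^2)$---reaches the same conclusion in one step and bypasses both the fourth-power computation and the product estimate for $v_a(x^2M+Mx^2)$. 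The paper's detour does buy Theorem~\ref{T.0.3} itself as an independent (and sharper) inequality, but for the present statement your approach is the shorter path.
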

\begin{proof}
Let $\theta\in \mathbb{R}$ and $M = xx^{\sharp_{a}} + x^{\sharp_{a}}x$.
We have
\begingroup\makeatletter\def\f@size{9}\check@mathfonts
\begin{align*}
\frac{e^{i\theta}\big(x^2M + Mx^2\big)
+ e^{-i\theta}\big((x^{\sharp_{a}})^2M + M(x^{\sharp_{a}})^2\big)}{2}
= \frac{e^{i\theta}x^2 + e^{-i\theta}(x^{\sharp_{a}})^2}{2}M
+ M\frac{e^{i\theta}x^2 + e^{-i\theta}(x^{\sharp_{a}})^2}{2}.
\end{align*}
\endgroup
Since $(x^{\sharp_{a}})^2$ is an $a$-adjoint of $x^2$ we get
\begingroup\makeatletter\def\f@size{10}\check@mathfonts
\begin{align*}
&{\left\|\frac{e^{i\theta}\big(x^2M + Mx^2\big)
+ e^{-i\theta}\big((x^{\sharp_{a}})^2M + M(x^{\sharp_{a}})^2\big)}{2}\right\|}_{a}
\\& = {\left\|\frac{e^{i\theta}x^2 + e^{-i\theta}(x^{\sharp_{a}})^2}{2}M
+ M\frac{e^{i\theta}x^2 + e^{-i\theta}(x^{\sharp_{a}})^2}{2}\right\|}_{a}
\\&\leq {\left\|\frac{e^{i\theta}x^2 + e^{-i\theta}(x^{\sharp_{a}})^2}{2}\right\|}_{a}
{\|M\|}_{a}
+ {\|M\|}_{a}{\left\|\frac{e^{i\theta}x^2 + e^{-i\theta}(x^{\sharp_{a}})^2}{2}\right\|}_{a}
\\& \leq v_{a}(x^2){\|M\|}_{a} + {\|M\|}_{a}v_{a}(x^2) = 2v_{a}(x^2){\|M\|}_{a},
\end{align*}
\endgroup
and hence
\begingroup\makeatletter\def\f@size{10}\check@mathfonts
\begin{align}\label{F.1.C.00.3}
{\left\|\frac{e^{i\theta}\big(x^2M + Mx^2\big)
+ e^{-i\theta}\big((x^{\sharp_{a}})^2M + M(x^{\sharp_{a}})^2\big)}{2}\right\|}_{a}
\leq 2v_{a}(x^2){\|M\|}_{a}.
\end{align}
\endgroup
Since $(x^{\sharp_{a}})^2M + M(x^{\sharp_{a}})^2$ is an $a$-adjoint of $x^2M + Mx^2$,
by taking the supremum over $\theta\in \mathbb{R}$ in \eqref{F.1.C.00.3}, we arrive at
\begingroup\makeatletter\def\f@size{10}\check@mathfonts
\begin{align}\label{F.2.C.00.3}
v_{a}\big(x^2M + Mx^2\big) \leq 2v_{a}(x^2){\|M\|}_{a}.
\end{align}
\endgroup
Now, by Theorem \ref{T.0.3} and \eqref{F.2.C.00.3}, we have
\begingroup\makeatletter\def\f@size{10}\check@mathfonts
\begin{align*}
v^4_{a}(x) &\leq \frac{1}{4}v^2_{a}(x^2) + \frac{1}{8} v_{a}\big(x^2M + Mx^2\big) + \frac{1}{16}{\|M^2\|}_{a}
\\& \leq \frac{1}{4}v^2_{a}(x^2) + \frac{1}{4}v_{a}(x^2){\|M\|}_{a} + \frac{1}{16}{\|M\|}^2_{a}
\\& = \left(\frac{1}{2}v_{a}(x^2) + \frac{1}{4}{\|M\|}_{a}\right)^2
\end{align*}
\endgroup
and so we deduce the desired result.
\end{proof}
\begin{remark}\label{C.0.49}
Theorem \ref{C.00.3} is an extension of \cite[Theorem 2.4]{A.K.2}.
\end{remark}
\begin{corollary}\label{C.0.5}
Let $\mathfrak{A}$ be a $C^*$-algebra and let $x\in{\mathfrak{A}}_{a}$. Then
\begin{align*}
\frac{1}{4}{\big\|xx^{\sharp_{a}} + x^{\sharp_{a}}x\big\|}_{a}
\leq v^2_{a}(x)\leq \frac{1}{2}{\big\|xx^{\sharp_{a}} + x^{\sharp_{a}}x\big\|}_{a}.
\end{align*}
\end{corollary}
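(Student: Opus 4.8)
The plan is to read off the two inequalities from two different earlier results. Throughout put $M := xx^{\sharp_{a}} + x^{\sharp_{a}}x$. The lower bound is essentially immediate from Theorem~\ref{T.5.3}, whereas the upper bound rests on an $a$-positivity argument that I route through the auxiliary estimate $v_{a}(x^2) \le \frac12\|M\|_{a}$ together with Theorem~\ref{C.00.3}.

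For the left inequality I would specialize Theorem~\ref{T.5.3} to $t = s = \frac12$. By Remark~\ref{R.4.2} one has $v_{_{(a, (\frac12, \frac12))}}(x) = v_{a}(x)$, while $\mathfrak{R}_{_{(\frac12,\frac12)}}(x) = \mathfrak{R}(x)$ and $\mathfrak{I}_{_{(\frac12,\frac12)}}(x) = \mathfrak{I}(x)$; since $ts = \frac14$, Theorem~\ref{T.5.3} becomes
\[
\frac14\|M\|_{a} + \frac12\sup_{\theta \in \mathbb{R}}\Big|\|\mathfrak{R}(e^{i\theta}x)\|_{a}^2 - \|\mathfrak{I}(e^{i\theta}x)\|_{a}^2\Big| \le v_{a}^2(x),
\]
and discarding the nonnegative second summand yields $\frac14\|M\|_{a} \le v_{a}^2(x)$.

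For the right inequality the key step is $v_{a}(x^2) \le \frac12\|M\|_{a}$. For each $\theta$ the element $S_\theta := \mathfrak{R}(e^{i\theta}x) = \frac{e^{i\theta}x + e^{-i\theta}x^{\sharp_{a}}}{2}$ is $a$-self-adjoint, so $S_\theta^2$ is $a$-positive (write $a = a^{1/2}a^{1/2}$; then $aS_\theta^2 = S_\theta^* a S_\theta = (a^{1/2}S_\theta)^*(a^{1/2}S_\theta) \ge 0$). Expanding gives $4S_\theta^2 = H_\theta + M$, where $H_\theta := e^{2i\theta}x^2 + e^{-2i\theta}(x^{\sharp_{a}})^2 = 2\mathfrak{R}(e^{2i\theta}x^2)$, using that $(x^{\sharp_{a}})^2$ is an $a$-adjoint of $x^2$ as in the proof of Theorem~\ref{C.00.3}. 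Replacing $\theta$ by $\theta + \frac\pi2$ flips $H_\theta$ to $-H_\theta$, so both $M + H_\theta$ and $M - H_\theta$ are $a$-positive. Hence for every $f \in \mathcal{S}_{a}(\mathfrak{A})$ (a positive multiple of a state, hence a positive functional) one gets $f(aM) \ge |f(aH_\theta)|$; taking suprema and invoking the equality $\|y\|_{a} = v_{a}(y) = \sup_{f}|f(ay)|$ valid for $a$-self-adjoint $y$ (equality in \eqref{I.000009}) converts this into $\|H_\theta\|_{a} \le \|M\|_{a}$. As $H_\theta = 2\mathfrak{R}(e^{2i\theta}x^2)$, taking the supremum over $\theta$ gives $2v_{a}(x^2) \le \|M\|_{a}$. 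Plugging this into Theorem~\ref{C.00.3} yields $v_{a}^2(x) \le \frac12 v_{a}(x^2) + \frac14\|M\|_{a} \le \frac12\|M\|_{a}$.

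The main obstacle is transferring the familiar Hilbert-space positivity argument to the semi-norm $\|\cdot\|_{a}$. Concretely, I must check that $M$ and each $H_\theta$ are honestly $a$-self-adjoint (from $ax^{\sharp_{a}} = x^*a$, together with $(x^{\sharp_{a}})^* a = ax$ and $a(x^{\sharp_{a}})^2 = (x^2)^*a$), that the $a$-positivity of $M \pm H_\theta$ passes through the scaled states $f \in \mathcal{S}_{a}(\mathfrak{A})$ to give $f(aM) \ge |f(aH_\theta)|$, and that the equality $\|y\|_{a} = v_{a}(y)$ for $a$-self-adjoint $y$ is genuinely available to turn these functional inequalities back into $\|H_\theta\|_{a} \le \|M\|_{a}$. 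Once these points are in place, the same positivity also delivers the upper bound directly, via $v_{a}^2(x) = \sup_\theta\|S_\theta^2\|_{a} = \frac14\sup_\theta\|H_\theta + M\|_{a} \le \frac12\|M\|_{a}$, bypassing Theorem~\ref{C.00.3} altogether.
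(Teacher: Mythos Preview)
Your argument is correct. For the lower bound you do exactly what the paper does: specialise Theorem~\ref{T.5.3} to $t=s=\tfrac12$ and discard the nonnegative supremum term.

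For the upper bound, however, the paper takes a different and shorter route. It invokes the power inequality $v_{a}(x^{2})\le v_{a}^{2}(x)$ for the $a$-numerical radius and bootstraps Theorem~\ref{C.00.3} into
\[
v_{a}^{2}(x)\le \tfrac12 v_{a}(x^{2}) + \tfrac14\|M\|_{a}\le \tfrac12 v_{a}^{2}(x) + \tfrac14\|M\|_{a},
\]
from which $v_{a}^{2}(x)\le \tfrac12\|M\|_{a}$ follows in one line. You instead prove the auxiliary bound $v_{a}(x^{2})\le \tfrac12\|M\|_{a}$ from scratch, via the $a$-positivity of $M\pm H_{\theta}=4S_{\theta}^{2},\,4S_{\theta+\pi/2}^{2}$ and the identity $\|y\|_{a}=v_{a}(y)$ for $a$-self-adjoint $y$, and then either feed it into Theorem~\ref{C.00.3} or (as in your final remark) argue directly that $\|H_{\theta}+M\|_{a}\le 2\|M\|_{a}$. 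Your approach is more self-contained---it does not import the power inequality, which the paper takes for granted---at the cost of several extra verifications ($a$-self-adjointness of $H_{\theta}$ and $M$, $a$-positivity of $S_{\theta}^{2}$, and the passage through $\mathcal{S}_{a}(\mathfrak{A})$). The paper's route is slicker but relies on an external ingredient; yours is longer but stands on its own, and the direct variant you sketch at the end in fact bypasses Theorem~\ref{C.00.3} altogether.
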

\begin{proof}
By Theorem \ref{C.00.3} and the power inequality for the $a$-numerical radius, we have
\begin{align*}
v^2_{a}(x) \leq \frac{1}{2}v_{a}(x^2) + \frac{1}{4}{\Big\|xx^{\sharp_{a}} + x^{\sharp_{a}}x\Big\|}_{a}
\leq \frac{1}{2}v^2_{a}(x) + \frac{1}{4}{\Big\|xx^{\sharp_{a}} + x^{\sharp_{a}}x\Big\|}_{a}.
\end{align*}
Thus
\begin{align}\label{I.1.C.0.5}
v^2_{a}(x) \leq \frac{1}{2}{\Big\|xx^{\sharp_{a}} + x^{\sharp_{a}}x\Big\|}_{a}.
\end{align}
Also, by Theorem \ref{T.5.3} with $t=s=\frac{1}{2}$, we have
\begin{align}\label{I.2.C.0.5}
\frac{1}{4}{\big\|xx^{\sharp_{a}} + x^{\sharp_{a}}x\big\|}_{a}
+ \frac{1}{8}\displaystyle{\sup_{\theta \in \mathbb{R}}}
\Big|{\big\|e^{i\theta}x + e^{-i\theta}x^{\sharp_{a}}\big\|}^2_{a} - {\big\|e^{i\theta}x - e^{-i\theta}x^{\sharp_{a}}\big\|}^2_{a}\Big|\leq v^2_{a}(x),
\end{align}
and hence
\begin{align}\label{I.3.C.0.5}
\frac{1}{4}{\big\|xx^{\sharp_{a}} + x^{\sharp_{a}}x\big\|}_{a} \leq v^2_{a}(x).
\end{align}
Now, from \eqref{I.1.C.0.5} and \eqref{I.3.C.0.5}, we deduce the desired result.
\end{proof}
\begin{remark}\label{C.0.499}
We remark that Corollary \ref{C.0.5} is an extension of \cite[Theorem 1]{K.2005}.
\end{remark}
\begin{remark}\label{R.0.40}
Since $x^{\sharp_{a}}x$ is an $a$-positive element in $\mathfrak{A}$, we have
\begin{align*}
{\big\|xx^{\sharp_{a}} + x^{\sharp_{a}}x\big\|}_{a}\geq {\big\|xx^{\sharp_{a}}\big\|}_{a} = {\|x\|}^2_{a}.
\end{align*}
Hence
\begin{align*}
\frac{1}{4}{\|x\|}^2_{a} \leq \frac{{\big\|xx^{\sharp_{a}} + x^{\sharp_{a}}x\big\|}_{a}}{4}
+ \frac{1}{8}\displaystyle{\sup_{\theta \in \mathbb{R}}}
\Big|{\big\|e^{i\theta}x + e^{-i\theta}x^{\sharp_{a}}\big\|}^2_{a} - {\big\|e^{i\theta}x - e^{-i\theta}x^{\sharp_{a}}\big\|}^2_{a}\Big|.
\end{align*}
Thus the inequality \eqref{I.2.C.0.5} refines the inequality $\frac{1}{2}{\|x\|}_{a} \leq v_{a}(x)$.
\end{remark}
As an immediate consequence of Theorem \ref{C.00.3} and Corollary \ref{C.0.5}, we have the following result.
\begin{corollary}\label{C.0.59}
Let $\mathfrak{A}$ be a $C^*$-algebra and let $x\in{\mathfrak{A}}_{a}$. If $ax^2 = 0$, then $v^2_{a}(x)=\frac{1}{4}{\big\|xx^{\sharp_{a}} + x^{\sharp_{a}}x\big\|}_{a}$.
\end{corollary}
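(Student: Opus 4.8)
The plan is to exploit the fact that the hypothesis $ax^2 = 0$ forces the $a$-numerical radius term $v_{a}(x^2)$ appearing in Theorem~\ref{C.00.3} to vanish, after which the upper bound supplied by that theorem and the lower bound supplied by Corollary~\ref{C.0.5} pinch $v^2_{a}(x)$ from both sides and collapse into the claimed equality.

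First I would pass from the hypothesis on $ax^2$ to a statement about the semi-norm. Since $\|y\|_{a} = 0$ if and only if $ay = 0$, the assumption $ax^2 = 0$ gives $\|x^2\|_{a} = 0$. Note that $x^2 \in \mathfrak{A}_{a}$: indeed $\mathfrak{A}_{a}$ is a subalgebra of $\mathfrak{A}$, and $(x^{\sharp_{a}})^2$ is an $a$-adjoint of $x^2$, so the quantity $v_{a}(x^2)$ is meaningful. Applying the equivalence inequality \eqref{I.000009} to $x^2$ then yields
\[
0 \leq v_{a}(x^2) \leq \|x^2\|_{a} = 0,
\]
whence $v_{a}(x^2) = 0$.

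Next I would feed this vanishing into Theorem~\ref{C.00.3}. Writing $M = xx^{\sharp_{a}} + x^{\sharp_{a}}x$ for brevity, that theorem gives
\[
v^2_{a}(x) \leq \frac{1}{2}\, v_{a}(x^2) + \frac{1}{4}\|M\|_{a} = \frac{1}{4}\|M\|_{a}.
\]
On the other hand, the left-hand inequality in Corollary~\ref{C.0.5} provides the reverse estimate $\frac{1}{4}\|M\|_{a} \leq v^2_{a}(x)$. Combining the two inequalities produces the asserted identity $v^2_{a}(x) = \frac{1}{4}\|xx^{\sharp_{a}} + x^{\sharp_{a}}x\|_{a}$.

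Because both of the governing inequalities are already in hand, there is no genuine obstacle here; the corollary is essentially a two-line deduction. The only point that requires a moment of care is the passage from $ax^2 = 0$ to $v_{a}(x^2) = 0$, which relies on the characterization of the null space of the semi-norm $\|\!\cdot\!\|_{a}$ together with the bound $v_{a}(\cdot) \leq \|\!\cdot\!\|_{a}$ from \eqref{I.000009}.
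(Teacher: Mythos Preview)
Your argument is correct and is precisely the route the paper takes: the corollary is stated there as an immediate consequence of Theorem~\ref{C.00.3} and Corollary~\ref{C.0.5}, and your write-up supplies exactly that deduction, together with the small justification that $ax^2=0$ forces $v_{a}(x^2)=0$ via $\|x^2\|_{a}=0$ and \eqref{I.000009}.
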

\begin{remark}\label{R.0.509}
Let $\mathfrak{A}$ be a $C^*$-algebra and let $x\in{\mathfrak{A}}_{a}$. Suppose that $ax\neq 0$ and $ax^2 = 0$.
By the Gelfand--Naimark theorem, $\mathfrak{A}$ can be considered as a norm
closed $*$-subalgebra of $\mathbb{B}(\mathcal{H})$ for some Hilbert space $\big(\mathcal{H}, \langle \cdot, \cdot\rangle\big)$.
By \cite[Theorem~4.1]{B.M.Positivity}, we know that
$v_{a}(x)=\sup\big\{\big|\langle ax\xi, \xi\rangle\big|: \xi\in \mathcal{H}, \, {\|\xi\|}_{a} = 1 \big\}$
and
${\|x\|}^2_{a}=\sup\big\{\big|\langle x^*ax\xi, \xi\rangle\big|: \xi\in \mathcal{H}, \, {\|\xi\|}_{a} = 1 \big\}$.
Hence by \cite[Corollary~2]{FekiAn2020} we have $v_{a}(x)=\frac{1}{2}{\|x\|}_{a}$.
\end{remark}
Next we prove the following inequality which improves on the lower bound the inequality $\frac{1}{2}{\|x\|}_{a} \leq v_{a}(x)$.
\begin{theorem}\label{T.51.3}
Let $\mathfrak{A}$ be a $C^*$-algebra and let $x\in{\mathfrak{A}}_{a}$. Then
\begin{align*}
\frac{1}{2}{\|x\|}_{a} + \frac{1}{4}\Big|{\big\|x + ix^{\sharp_{a}}\big\|}_{a} - {\big\|x -ix^{\sharp_{a}}\big\|}_{a}\Big|\leq v_{a}(x).
\end{align*}
\end{theorem}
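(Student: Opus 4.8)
The plan is to establish this lower bound by comparing $v_a(x)$ against the weighted seminorm $v_{(a,(t,s))}(x)$ for the symmetric choice $t=s=\tfrac{1}{2}$, and then extracting the extra term via the reverse triangle inequality, exactly as in the proof structure of Theorem~\ref{T.5.3} specialized to this setting. First I would recall from Definition~\ref{D.1.2} with $t=s=\tfrac12$ that $v_a(x)=\sup_{\theta}\bigl\|\tfrac{e^{i\theta}x+e^{-i\theta}x^{\sharp_a}}{2}\bigr\|_a$, so that for every fixed $\theta$ we have both $v_a(x)\ge \tfrac12\|e^{i\theta}x+e^{-i\theta}x^{\sharp_a}\|_a$ and, by Remark~\ref{R.2.2}, $v_a(x)\ge\tfrac12\|e^{i\theta}x-e^{-i\theta}x^{\sharp_a}\|_a$ (the imaginary-part version). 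Averaging these two and applying the triangle inequality to the sum
\[
\bigl(e^{i\theta}x+e^{-i\theta}x^{\sharp_a}\bigr)+\bigl(e^{i\theta}x-e^{-i\theta}x^{\sharp_a}\bigr)=2e^{i\theta}x
\]
already recovers the crude bound $\tfrac12\|x\|_a\le v_a(x)$, since $\|e^{i\theta}x\|_a=\|x\|_a$. The point is to do better by keeping the max rather than the average.

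The key step is to write, for each $\theta$,
\[
v_a(x)\ge\max\Bigl\{\tfrac12\|e^{i\theta}x+e^{-i\theta}x^{\sharp_a}\|_a,\ \tfrac12\|e^{i\theta}x-e^{-i\theta}x^{\sharp_a}\|_a\Bigr\},
\]
and then use the elementary identity $\max\{p,q\}=\tfrac{p+q}{2}+\tfrac{|p-q|}{2}$. This gives
\[
v_a(x)\ge\frac14\|e^{i\theta}x+e^{-i\theta}x^{\sharp_a}\|_a+\frac14\|e^{i\theta}x-e^{-i\theta}x^{\sharp_a}\|_a+\frac14\Bigl|\,\|e^{i\theta}x+e^{-i\theta}x^{\sharp_a}\|_a-\|e^{i\theta}x-e^{-i\theta}x^{\sharp_a}\|_a\,\Bigr|.
\]
Applying the triangle inequality to the first two terms as above bounds their sum below by $\tfrac14\|2e^{i\theta}x\|_a=\tfrac12\|x\|_a$, which is independent of $\theta$. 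Specializing the remaining absolute-value term at $\theta=0$ produces the quantity $\bigl|\,\|x+x^{\sharp_a}\|_a-\|x-x^{\sharp_a}\|_a\,\bigr|$; however, the target statement uses $\|x+ix^{\sharp_a}\|_a$ and $\|x-ix^{\sharp_a}\|_a$, so I would instead choose $\theta=\pi/2$ (so that $e^{i\theta}=i$, $e^{-i\theta}=-i$), giving $\|ix-(-i)x^{\sharp_a}\|_a=\|x+x^{\sharp_a}\|_a$—hence the correct matching is obtained at a specific phase and one reads off
\[
\Bigl|\,\|x+ix^{\sharp_a}\|_a-\|x-ix^{\sharp_a}\|_a\,\Bigr|
\]
by selecting $\theta$ with $e^{i\theta}=1,\ e^{-i\theta}=i$ after absorbing the unimodular scalar, using that $\|\lambda y\|_a=|\lambda|\,\|y\|_a$.

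The main obstacle I anticipate is bookkeeping the phase factor so that the absolute-value term lands precisely on $\bigl|\,\|x+ix^{\sharp_a}\|_a-\|x-ix^{\sharp_a}\|_a\,\bigr|$ rather than some rotated variant. The clean way to handle this is to note that $\|e^{i\theta}x\pm e^{-i\theta}x^{\sharp_a}\|_a$ at $\theta=\pi/4$ gives $\|e^{i\pi/4}(x\pm ix^{\sharp_a})\|_a=\|x\pm ix^{\sharp_a}\|_a$, which exactly produces the two terms in the claim. Thus I would carry out the whole max-identity argument at the single value $\theta=\pi/4$: this keeps the lower bound $\tfrac12\|x\|_a$ from the triangle inequality on the first two terms (which remains valid for any $\theta$) while forcing the residual term into the desired form. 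Taking no supremum is needed since the bound already holds pointwise at $\theta=\pi/4$, though passing to $\sup_\theta$ would only strengthen it; the stated inequality then follows immediately.
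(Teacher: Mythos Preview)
Your argument is correct. Once you settle on $\theta=\pi/4$, the computation is clean: $e^{i\pi/4}x+e^{-i\pi/4}x^{\sharp_a}=e^{i\pi/4}(x-ix^{\sharp_a})$ and $e^{i\pi/4}x-e^{-i\pi/4}x^{\sharp_a}=e^{i\pi/4}(x+ix^{\sharp_a})$ (note the signs come out swapped relative to what you wrote, but this is harmless since you take a max and an absolute difference). From $v_a(x)\ge\tfrac12\max\{\|x+ix^{\sharp_a}\|_a,\|x-ix^{\sharp_a}\|_a\}$ the max-identity and the triangle inequality give exactly the stated bound.

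The paper follows the same final step, but reaches the intermediate inequality $\tfrac12\|x\pm ix^{\sharp_a}\|_a\le v_a(x)$ by a different route: it works with states $f\in\mathcal{S}(\mathfrak{A})$, bounds $\bigl|f\bigl(a(\mathfrak{R}(x)\pm\mathfrak{I}(x))\bigr)\bigr|$ by $\sqrt{2}\,f(a)v_a(x)$ via the Cauchy-type inequality $|\alpha|+|\beta|\le\sqrt{2}\sqrt{\alpha^2+\beta^2}$, then invokes $a$-self-adjointness of $\mathfrak{R}(x)\pm\mathfrak{I}(x)$ to pass from the numerical-range bound to a norm bound. Your approach bypasses both the state computation and the $a$-self-adjointness check by simply reading off the inequality from the supremum formula $v_a(x)=\sup_\theta\|\mathfrak{R}(e^{i\theta}x)\|_a$ at a single well-chosen phase; this is shorter and more elementary. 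The paper's route, on the other hand, makes the role of the decomposition $f(ax)=f(a\mathfrak{R}(x))+if(a\mathfrak{I}(x))$ explicit and would generalize more readily to estimates where the relevant element is not literally of the form $\mathfrak{R}(e^{i\theta}x)$.
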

\begin{proof}
Let $f\in {\mathcal{S}}(\mathfrak{A})$ with $f(a)\neq 0$. We have
\begingroup\makeatletter\def\f@size{10}\check@mathfonts
\begin{align*}
\frac{1}{\sqrt{2}}\left|f\left(a\left(\frac{x+x^{\sharp_{a}}}{2}\pm\frac{x-x^{\sharp_{a}}}{2i}\right)\right)\right|
&= \frac{1}{\sqrt{2}}\left|f\left(\frac{ax+x^*a}{2}\pm\frac{ax-x^*a}{2i}\right)\right|
\\&\leq \frac{1}{\sqrt{2}}\left(\left|f\left(\frac{ax+x^*a}{2}\right)\right| + \left|f\left(\frac{ax-x^*a}{2i}\right)\right|\right)
\\& \leq \sqrt{f^2\left(\frac{ax+x^*a}{2}\right) + f^2\left(\frac{ax-x^*a}{2i}\right)}
\\& = \sqrt{\left(\frac{f(ax)+ \overline{f}(ax)}{2}\right)^2 + \left(\frac{f(ax)- \overline{f}(ax)}{2i}\right)^2}
\\& = |f(ax)| \leq f(a)v_{a}(x).
\end{align*}
\endgroup
Hence
\begin{align}\label{F.1.T.51.3}
\frac{1}{\sqrt{2}}\frac{\left|f\left(a\left(\frac{x+x^{\sharp_{a}}}{2}\pm\frac{x-x^{\sharp_{a}}}{2i}\right)\right)\right|}{f(a)}
\leq v_{a}(x).
\end{align}
Since $\frac{x+x^{\sharp_{a}}}{2}\pm\frac{x-x^{\sharp_{a}}}{2i}$ are $a$-self-adjoint,
by taking the supremum over $f\in {\mathcal{S}}(\mathfrak{A})$ with $f(a)\neq 0$ in \eqref{F.1.T.51.3}, we get
\begin{align*}
\frac{1}{\sqrt{2}}{\left\|\frac{x+x^{\sharp_{a}}}{2}\pm\frac{x-x^{\sharp_{a}}}{2i}\right\|}_{a} \leq v_{a}(x).
\end{align*}
Thus
\begin{align*}
\frac{1}{\sqrt{2}}\max\left\{{\left\|\frac{x+x^{\sharp_{a}}}{2}+\frac{x-x^{\sharp_{a}}}{2i}\right\|}_{a},
{\left\|\frac{x+x^{\sharp_{a}}}{2}-\frac{x-x^{\sharp_{a}}}{2i}\right\|}_{a}\right\} \leq v_{a}(x),
\end{align*}
or equivalently,
\begin{align*}
\frac{1}{2}\max\Big\{{\|x+ix^{\sharp_{a}}\|}_{a},
{\|x-ix^{\sharp_{a}}\|}_{a}\Big\} \leq v_{a}(x).
\end{align*}
Therefore,
\begin{align*}
v_{a}(x)&\geq \frac{{\|x+ix^{\sharp_{a}}\|}_{a}+{\|x-ix^{\sharp_{a}}\|}_{a}}{4} + \frac{\Big|{\|x+ix^{\sharp_{a}}\|}_{a}-{\|x-ix^{\sharp_{a}}\|}_{a}\Big|}{4}
\\& \geq \frac{{\Big\|(x+ix^{\sharp_{a}})+(x-ix^{\sharp_{a}})\Big\|}_{a}}{4} + \frac{\Big|{\|x+ix^{\sharp_{a}}\|}_{a}-{\|x-ix^{\sharp_{a}}\|}_{a}\Big|}{4}
\\& = \frac{1}{2}{\|x\|}_{a} + \frac{1}{4}\Big|{\big\|x + ix^{\sharp_{a}}\big\|}_{a} - {\big\|x -ix^{\sharp_{a}}\big\|}_{a}\Big|.
\end{align*}
\end{proof}
For $x\in{\mathfrak{A}}_{a}$, following \cite{A.K.1},
let $d_{_{a}}(x)$ denotes the $a$-numerical radius distance
of $x$ from the scalar elements, that is,
\begin{align*}
d_{_{a}}(x) = \displaystyle{\inf_{\zeta \in \mathbb{C}}} \,v_{_{a}}(x - \zeta \textbf{1}).
\end{align*}
Then there exists a complex number $\zeta_0$ satisfying
$d_{_{a}}(x) = v_{_{a}}(x - \zeta_0 \textbf{1})$. Indeed,
put $\mathbb{D} := \left\{\zeta \in \mathbb{C}: \, |\zeta| \leq 2v_{_{a}}(x)\right\}$
and define $f:\, \mathbb{D}\rightarrow  \mathbb{R}^{\geq 0}$
by the formula $f(\zeta) = v_{_{a}}\big(x - \zeta \textbf{1}\big)$.
Clearly, $f$ is continuous and attains its minimum at, say, $\zeta_0 \in \mathbb{D}$ (of course, there may be
many such points). Then $v_{_{a}}\big(x - \zeta \textbf{1}\big) \geq v_{_{a}}\big(x - \zeta_0 \textbf{1}\big)$
for all $\zeta \in \mathbb{D}$. In particular, in view of $0 \in \mathbb{D}$,
we have $v_{_{a}}(x) \geq v_{_{a}}\big(x - \zeta_0 \textbf{1}\big)$.
If $\zeta \notin \mathbb{D}$, then $|\zeta| > 2v_{_{a}}(x)$. Thus
\begin{align*}
v_{_{a}}\big(x - \zeta \textbf{1}\big) \geq |\zeta| - v_{_{a}}(x)
> 2v_{_{a}}(x) - v_{_{a}}(x) = v_{_{a}}(x) \geq v_{_{a}}\big(x - \zeta_0 \textbf{1}\big),
\end{align*}
whence $v_{_{a}}\big(x - \zeta \textbf{1}\big) \geq v_{_{a}}\big(x - \zeta_0 \textbf{1}\big)$
for all $\zeta \notin \mathbb{D}$.
Therefore, $v_{_{a}}\big(x - \zeta\textbf{1}\big) \geq v_{_{a}}\big(x - \zeta_0 \textbf{1}\big)$
for all $\zeta \in \mathbb{C}$ and so $d_{_{a}}(x) = v_{_{a}}\big(x - \zeta_0 \textbf{1}\big)$.

Next, we obtain a more precise estimate than the first inequality in Corollary \ref{C.0.5}.
\begin{theorem}\label{t.2.13.9}
Let $\mathfrak{A}$ be a $C^*$-algebra and let $x\in {\mathfrak{A}}_{a}$. Then
\begin{align*}
\frac{1}{4}{\left\|xx^{\sharp_{a}} + x^{\sharp_{a}}x\right\|}_{a} \leq \frac{1}{2}\Big(v_{_{a}}^2(x) + d_{_{a}}^2(x)\Big)
\leq v_{_{a}}^2(x).
\end{align*}
\end{theorem}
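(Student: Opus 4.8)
The plan is to prove the two inequalities separately. The right-hand inequality $\frac12\big(v_a^2(x)+d_a^2(x)\big)\le v_a^2(x)$ is equivalent to $d_a(x)\le v_a(x)$, which is immediate from the definition of $d_a(x)$ as an infimum over $\zeta\in\mathbb{C}$: taking $\zeta=0$ gives $d_a(x)\le v_a(x)$, and squaring yields the claim. So the real content is the left-hand inequality, and I would spend all the effort there.

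For the left-hand inequality, the key is to connect $\|xx^{\sharp_a}+x^{\sharp_a}x\|_a$ with the translate $x-\zeta_0\mathbf{1}$ realizing $d_a(x)$. The natural tool is Corollary \ref{C.0.5}, whose first inequality reads $\frac14\|yy^{\sharp_a}+y^{\sharp_a}y\|_a\le v_a^2(y)$ for any $y\in\mathfrak{A}_a$. I would apply it to $y=x-\zeta_0\mathbf{1}$, where $\zeta_0$ is the minimizer guaranteed by the paragraph preceding the theorem, so that $v_a^2(y)=d_a^2(x)$. Writing out $(x-\zeta_0\mathbf{1})^{\sharp_a}=x^{\sharp_a}-\overline{\zeta_0}\mathbf{1}$ (using that $\mathbf{1}$ is its own $a$-adjoint) and expanding
\begin{align*}
yy^{\sharp_a}+y^{\sharp_a}y
&=\big(xx^{\sharp_a}+x^{\sharp_a}x\big)-2\,\mathrm{Re}(\zeta_0)\big(x+x^{\sharp_a}\big)+2|\zeta_0|^2\mathbf{1},
\end{align*}
I would then relate $\|xx^{\sharp_a}+x^{\sharp_a}x\|_a$ to $\|yy^{\sharp_a}+y^{\sharp_a}y\|_a$ together with a term controlled by $v_a(x)$. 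The cross term $x+x^{\sharp_a}=2\mathfrak{R}(x)$ is $a$-self-adjoint, so $\|{\cdot}\|_a$ of it equals its $a$-numerical radius, bounded by $2v_a(x)$; this is where $v_a^2(x)$ enters the final bound.

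I expect the main obstacle to be organizing the two contributions so they combine into exactly $\frac12(v_a^2(x)+d_a^2(x))$ rather than some looser sum. A cleaner route that avoids delicate bookkeeping with $\zeta_0$ is to use the parallelogram-type identity hidden in Corollary \ref{C.0.5}: since $\frac14\|yy^{\sharp_a}+y^{\sharp_a}y\|_a\le v_a^2(y)$ holds for every $y$, and since for scalars $v_a(x-\zeta\mathbf{1})$ is minimized, I would try averaging the two estimates obtained from $y=x$ and from $y=x-\zeta_0\mathbf{1}$. Concretely, I would show $\frac14\|xx^{\sharp_a}+x^{\sharp_a}x\|_a\le\frac12 v_a^2(x)+\frac12 v_a^2(x-\zeta_0\mathbf{1})$ by establishing that $xx^{\sharp_a}+x^{\sharp_a}x$ can be written as the average (up to a positive-semidefinite correction absorbed by the semi-norm) of the corresponding expressions for $x$ and $x-\zeta_0\mathbf{1}$, with the optimality of $\zeta_0$ forcing the linear-in-$\zeta$ term to vanish in the relevant direction. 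The delicate point is justifying that the choice of $\zeta_0$ kills precisely the $\mathfrak{R}(x)$-cross-term; I would handle this by invoking that $v_a(x-\zeta_0\mathbf{1})$ is a minimum, so any first-order perturbation in $\zeta$ cannot decrease it, yielding the needed orthogonality-type relation and closing the estimate.
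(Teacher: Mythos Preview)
Your handling of the right-hand inequality is fine and matches the paper. The left-hand inequality, however, is where your proposal breaks down.

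First, a computational slip: your expansion of $yy^{\sharp_a}+y^{\sharp_a}y$ for $y=x-\zeta_0\mathbf{1}$ is incorrect. The cross term is $-2\big(\overline{\zeta_0}\,x+\zeta_0\,x^{\sharp_a}\big)$, not $-2\,\mathrm{Re}(\zeta_0)(x+x^{\sharp_a})$; the imaginary part of $\zeta_0$ contributes a $\mathfrak{I}(x)$ term that you dropped. More seriously, the ``averaging plus optimality'' strategy you outline does not close. Averaging the Corollary~\ref{C.0.5} bounds for $x$ and for $x-\zeta_0\mathbf{1}$ gives an upper bound for $\tfrac18\big(\|xx^{\sharp_a}+x^{\sharp_a}x\|_a+\|yy^{\sharp_a}+y^{\sharp_a}y\|_a\big)$, not for $\tfrac14\|xx^{\sharp_a}+x^{\sharp_a}x\|_a$. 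To recover the latter you would need the residual $(\overline{\zeta_0}x+\zeta_0 x^{\sharp_a})-|\zeta_0|^2\mathbf{1}$ to be controlled by the optimality of $\zeta_0$, but $v_a(x-\zeta\mathbf{1})$ is a seminorm in $\zeta$, not a smooth function, and minimality at $\zeta_0$ carries no first-order ``orthogonality'' condition of the kind you invoke. That step is a genuine gap.

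The paper avoids all of this with a rotation trick you did not anticipate. One writes, for $\zeta_0\ne 0$, the unimodular scalar $\zeta=\overline{\zeta_0}/|\zeta_0|$, so that $\zeta\zeta_0=|\zeta_0|$ is real. Then $(\zeta x)(\zeta x)^{\sharp_a}+(\zeta x)^{\sharp_a}(\zeta x)=xx^{\sharp_a}+x^{\sharp_a}x$ since $|\zeta|=1$, and crucially $\mathfrak{I}\big(\zeta(x-\zeta_0\mathbf{1})\big)=\mathfrak{I}(\zeta x)$ because one is subtracting a \emph{real} multiple of $\mathbf{1}$. Using the identity $\mathfrak{R}^2(\zeta x)+\mathfrak{I}^2(\zeta x)=\tfrac12\big(xx^{\sharp_a}+x^{\sharp_a}x\big)$ (equation~\eqref{F.2.T.5.3} with $t=s=\tfrac12$) together with the triangle inequality yields
\[
\tfrac14\|xx^{\sharp_a}+x^{\sharp_a}x\|_a\le \tfrac12\big(\|\mathfrak{R}(\zeta x)\|_a^2+\|\mathfrak{I}(\zeta(x-\zeta_0\mathbf{1}))\|_a^2\big)\le \tfrac12\big(v_a^2(x)+d_a^2(x)\big).
\]
The point is that the rotation converts the awkward complex shift into a real one, which leaves $\mathfrak{I}$ untouched and lets the two seminorm terms be bounded by $v_a(x)$ and $d_a(x)$ separately. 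No optimality or perturbation argument is needed.
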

\begin{proof}
Clearly,
$\frac{1}{2}\Big(v_{_{a}}^2(x) + d_{_{a}}^2(x)\Big)
\leq v_{_{a}}^2(x)$.
It is therefore enough to prove the first inequality.
Let $\zeta_0 \in \mathbb{C}$ such that
\begin{align*}
d_{_{a}}(x) = v_{_{a}}(x - \zeta_0\textbf{1}).
\end{align*}
If $\zeta_0 = 0$, then $d_{_{a}}(x) = v_{_{a}}(x)$. By employing \eqref{F.2.T.5.3} with $t=s=\frac{1}{2}$ and $\theta = 0$,
the triangle inequality for the semi-norm ${\|\!\cdot\!\|}_{a}$ and Remark \ref{R.2.2} we have
\begin{align*}
\frac{1}{4}{\left\|xx^{\sharp_{a}} + x^{\sharp_{a}}x\right\|}_{a} &
= \frac{1}{2}{\left\|\mathfrak{R}^2(x) + \mathfrak{I}^2(x)\right\|}_{a}
\\& \leq \frac{1}{2}\Big({\left\|\mathfrak{R}^2(x)\right\|}_{a}
+ {\left\|\mathfrak{I}^2(x)\right\|}_{a}\Big)
\\& \leq \frac{1}{2}\Big({\left\|\mathfrak{R}(x)\right\|}^2_{a}
+ {\left\|\mathfrak{I}(x)\right\|}^2_{a}\Big)
\\& \leq \frac{1}{2}\Big(v_{_{a}}^2(x) + v_{_{a}}^2(x)\Big)
= \frac{1}{2}\Big(v_{_{a}}^2(x) + d_{_{a}}^2(x)\Big).
\end{align*}
If $\zeta_0 \neq 0$, then put $\zeta = \frac{\overline{\zeta_0}}{|\zeta_0|}$.
As above we get that
\begin{align}\label{F.2.T.5.38}
\frac{1}{4}{\left\|(\zeta x)(\zeta x)^{\sharp_{a}} + (\zeta x)^{\sharp_{a}}(\zeta x)\right\|}_{a}
\leq \frac{1}{2}\Big({\left\|\mathfrak{R}(\zeta x)\right\|}^2_{a}
+ {\left\|\mathfrak{I}(\zeta x)\right\|}^2_{a}\Big).
\end{align}
Since $xx^{\sharp_{a}} + x^{\sharp_{a}}x=(\zeta x)(\zeta x)^{\sharp_{a}} + (\zeta x)^{\sharp_{a}}(\zeta x)$ and
$\mathfrak{I}\big(\zeta (x-\zeta_0\textbf{1})\big) = \mathfrak{I}(\zeta x)$, by \eqref{F.2.T.5.38}, we have
\begin{align*}
\frac{1}{4}{\left\|xx^{\sharp_{a}} + x^{\sharp_{a}}x\right\|}_{a} &
\leq \frac{1}{2}\Big({\left\|\mathfrak{R}(\zeta x)\right\|}^2_{a}
+ {\left\|\mathfrak{I}\big(\zeta (x-\zeta_0\textbf{1})\big)\right\|}^2_{a}\Big)
\\& \leq \frac{1}{2}\Big(v_{_{a}}^2(\zeta x) + v_{_{a}}^2(\zeta (x-\zeta_0\textbf{1})\Big)
= \frac{1}{2}\Big(v_{_{a}}^2(x) + d_{_{a}}^2(x)\Big).
\end{align*}
This completes the proof.
\end{proof}
We finish this section by an upper bound
for the $a$-numerical radius of products of elements in $C^*$-algebras.
\begin{theorem}\label{T.6.289}
Let $\mathfrak{A}$ be a $C^*$-algebra and let $x, y\in {\mathfrak{A}}_{a}$. The following inequalities hold:
\begin{align*}
v_{_{a}}(xy) \leq {\|xy\|}_{a} \leq \min\big\{K_1, K_2, K_3\big\} \leq 4v_{_{a}}(x)v_{_{a}}(y),
\end{align*}
where $K_1= {\|x\|}_{a}\big(v_{_{a}}(y) + d_{_{a}}(y)\big)$,
$K_2 = {\|y\|}_{a}\big(v_{_{a}}(x) + d_{_{a}}(x)\big)$
and $K_3 = \big(v_{_{a}}(x) + d_{_{a}}(x)\big)\big(v_{_{a}}(y) + d_{_{a}}(y)\big)$.
\end{theorem}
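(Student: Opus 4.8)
The plan is to split the chain into its three constituent inequalities and treat them separately. The leftmost inequality $v_{_a}(xy)\le\|xy\|_a$ is immediate: since $\mathfrak{A}_a$ is a subalgebra we have $xy\in\mathfrak{A}_a$, and the right-hand bound in \eqref{I.000009} applied to $xy$ gives exactly this. The rightmost inequality $\min\{K_1,K_2,K_3\}\le 4v_{_a}(x)v_{_a}(y)$ is also routine: the left-hand bound in \eqref{I.000009} yields $\|x\|_a\le 2v_{_a}(x)$ and $\|y\|_a\le 2v_{_a}(y)$, while choosing $\zeta=0$ in the definition of $d_{_a}$ gives $d_{_a}(x)\le v_{_a}(x)$ and $d_{_a}(y)\le v_{_a}(y)$. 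Substituting these four estimates into $K_1$, $K_2$ and $K_3$ shows that each of the three quantities is at most $4v_{_a}(x)v_{_a}(y)$, hence so is their minimum.

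All the substance lies in the middle inequality $\|xy\|_a\le\min\{K_1,K_2,K_3\}$, which I would derive from two ingredients: the submultiplicativity $\|xy\|_a\le\|x\|_a\|y\|_a$ recorded in the introduction, and the single key lemma
\[
\|z\|_a\le v_{_a}(z)+d_{_a}(z)\qquad(z\in\mathfrak{A}_a).
\]
Granting the lemma, estimating only the factor $\|y\|_a$ gives $\|xy\|_a\le\|x\|_a\bigl(v_{_a}(y)+d_{_a}(y)\bigr)=K_1$; estimating only $\|x\|_a$ gives $K_2$; estimating both factors gives $K_3$. Taking the minimum over the three choices yields the middle inequality.

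The real work, and the main obstacle, is the key lemma, for which I would argue as follows. The minimizer $\zeta_0$ with $d_{_a}(z)=v_{_a}(z-\zeta_0\textbf{1})$ exists by the discussion preceding the theorem. Using the Gelfand--Naimark representation exactly as in Remark \ref{R.0.509}, realize $\mathfrak{A}\subseteq\mathbb{B}(\mathcal{H})$ and pass to the semi-inner product $\langle\cdot,\cdot\rangle_a=\langle a\,\cdot,\cdot\rangle$, under which $\|z\|_a=\sup\{|\langle z\xi,\eta\rangle_a|:\|\xi\|_a=\|\eta\|_a=1\}$ and $v_{_a}(z)=\sup\{|\langle z\xi,\xi\rangle_a|:\|\xi\|_a=1\}$. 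For a fixed pair of unit vectors $\xi,\eta$ I would compress $z$ to the (at most two-dimensional) subspace they span; since compressing does not increase the numerical radius, the resulting matrix $M$ satisfies $|\langle z\xi,\eta\rangle_a|\le\|M\|$, $w(M)\le v_{_a}(z)$ and $\inf_{\lambda}w(M-\lambda I)\le d_{_a}(z)$. Thus the lemma reduces to the purely matricial inequality $\|M\|\le w(M)+\inf_{\lambda}w(M-\lambda I)$, the $a$-free prototype of the statement being extended.

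The decisive point is that this two-dimensional step cannot be bypassed. A naive single-vector estimate, splitting $z\xi$ into its component along $\xi$ (controlled by $v_{_a}(z)$) plus an $a$-orthogonal remainder (controlled, after subtracting $\zeta_0\textbf{1}$, by $2d_{_a}(z)$), only delivers $\|z\|_a\le\sqrt{v_{_a}(z)^2+4\,d_{_a}(z)^2}$; when $d_{_a}(z)$ is close to $v_{_a}(z)$ this overshoots the target $v_{_a}(z)+d_{_a}(z)$ (for an $a$-nilpotent of order two one has $v_{_a}=d_{_a}$, the true value is $v_{_a}+d_{_a}$, yet the single-vector bound is the strictly larger $\sqrt{5}\,v_{_a}$). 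The sharp constant therefore hinges on the geometry of the numerical range of the compression: via the elliptical range theorem $\inf_{\lambda}w(M-\lambda I)=w(M_0)$ for the trace-zero part $M_0$ of $M$, and establishing $\|M\|\le w(M)+w(M_0)$ is the genuinely nontrivial kernel of the argument. I expect this matricial inequality, rather than any of the bookkeeping above, to be where the care is needed.
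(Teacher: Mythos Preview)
Your overall structure---handling the outer inequalities by \eqref{I.000009} and reducing the middle one to the key lemma $\|z\|_a\le v_{_a}(z)+d_{_a}(z)$ via submultiplicativity---is exactly the paper's plan. Where you diverge is in the proof of that lemma, and there you have made the problem considerably harder than it is, leaving the decisive step unproven.

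The paper proves the lemma directly, with no Gelfand--Naimark representation, no compression, and no matrix theory. Take $\zeta_0$ with $d_{_a}(z)=v_{_a}(z-\zeta_0\mathbf{1})$. If $\zeta_0=0$ the lemma is just $\|z\|_a\le 2v_{_a}(z)$. Otherwise set $\xi=\overline{\zeta_0}/|\zeta_0|$, so that $\xi\zeta_0=|\zeta_0|$ is real and hence $\mathfrak{I}(\xi z)=\mathfrak{I}\bigl(\xi(z-\zeta_0\mathbf{1})\bigr)$. Then
\[
\|z\|_a=\|\xi z\|_a\le\|\mathfrak{R}(\xi z)\|_a+\|\mathfrak{I}(\xi z)\|_a
\le v_{_a}(\xi z)+v_{_a}\bigl(\xi(z-\zeta_0\mathbf{1})\bigr)=v_{_a}(z)+d_{_a}(z),
\]
using only the triangle inequality and the identity $v_{_a}(w)=\sup_\theta\|\mathfrak{R}(e^{i\theta}w)\|_a$. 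That is the entire argument; the paper simply runs it inline for $y$, for $x$, and for both to obtain $K_1$, $K_2$, $K_3$.

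Your route---compress to the span of $\xi,\eta$ and invoke the $2\times2$ inequality $\|M\|\le w(M)+w(M_0)$ via the elliptical range theorem---would ultimately succeed (indeed that $2\times2$ inequality is itself most cleanly proved by the very phase rotation above), but it imports machinery that buys nothing here, and you explicitly left its core step open. The ``genuinely nontrivial kernel'' you anticipated is an artifact of the detour: once you rotate so that the minimizing scalar shift becomes real, the $a$-real/$a$-imaginary decomposition already separates $v_{_a}(z)$ from $d_{_a}(z)$ with no loss, and the single-vector difficulty you diagnosed disappears.
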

\begin{proof}
It is evident that $v_{_{a}}(xy) \leq {\|xy\|}_{a}$.
The first inequality in \eqref{I.000009} and the fact that $d_{_{a}}(z)\leq v_{_{a}}(z)$ holds for every $z\in{\mathfrak{A}}_{a}$
imply the third desired inequality.
It is therefore enough to prove the second inequality.
Let $\zeta_0, \xi_0 \in \mathbb{C}$ such that $d_{_{a}}(x) = v_{_{a}}(x - \zeta_0\textbf{1})$ and $d_{_{a}}(y) = v_{_{a}}(y - \xi_0\textbf{1})$.
If $\xi_0 = 0$, then by the first inequality in \eqref{I.000009} we get
\begin{align*}
{\|xy\|}_{a} \leq {\|x\|}_{a}{\|y\|}_{a} \leq 2{\|x\|}_{a} v_{_{a}}(y) = {\|x\|}_{a}\big(v_{_{a}}(y) + d_{_{a}}(y)\big) = K_1.
\end{align*}
Hence, we may assume that $\xi_0 \neq 0$. Put $\xi = \frac{\overline{\xi_0}}{|\xi_0|}$.
Then $\mathfrak{I}\big(\xi (y-\xi_0\textbf{1}\big) = \mathfrak{I}(\xi y)$ and by Remark \ref{R.2.2} we have
\begin{align*}
{\|xy\|}_{a} &= {\left\|x(\xi y)\right\|}_{a}
\\& \leq {\|x\|}_{a}{\|\xi y\|}_{a}
\\&= {\|x\|}_{a}{\|\mathfrak{R}(\xi y) + i\mathfrak{I}(\xi y)\|}_{a}
\\& \leq {\|x\|}_{a}\Big({\|\mathfrak{R}(\xi y)\|}_{a} + {\|\mathfrak{I}(\xi y)\|}_{a}\Big)
\\& = {\|x\|}_{a}\Big({\|\mathfrak{R}(\xi y)\|}_{a} + {\|\mathfrak{I}(\xi (y-\xi_0\textbf{1})\|}_{a}\Big)
\\& \leq {\|x\|}_{a}\Big(v_{_{a}}(\xi y) + v_{_{a}}(\xi (y-\xi_0\textbf{1})\Big)
= {\|x\|}_{a}\big(v_{_{a}}(y) + d_{_{a}}(y)\big).
\end{align*}
Thus
\begin{align}\label{I.1.T.6.2}
{\|xy\|}_{a} \leq K_1.
\end{align}
By a similar argument, we obtain
\begin{align}\label{I.2.T.6.2}
{\|xy\|}_{a} \leq K_2.
\end{align}
Further, put $\zeta = \frac{\overline{\zeta_0}}{|\zeta_0|}$.
Then $\mathfrak{I}(\zeta (x-\zeta_0\textbf{1}) = \mathfrak{I}(\zeta x)$.
We have
\begin{align*}
{\|xy\|}_{a} &= {\|(\zeta x)(\xi y)\|}_{a}
\\ &\leq {\|\zeta x\|}_{a}{\|\xi y\|}_{a}
\\& = {\left\|\mathfrak{R}(\zeta x) + i\mathfrak{I}(\zeta x)\right\|}_{a}
{\left\|\mathfrak{R}(\xi y) + i\mathfrak{I}(\xi y)\right\|}_{a}
\\& \leq \Big({\left\|\mathfrak{R}(\zeta x)\right\|}_{a} + {\left\|\mathfrak{I}(\zeta x)\right\|}_{a}\Big)
\Big({\left\|\mathfrak{R}(\xi y)\right\|}_{a} + {\left\|\mathfrak{I}(\xi y)\right\|}_{a}\Big)
\\& = \Big({\left\|\mathfrak{R}(\zeta x)\right\|}_{a} + {\left\|\mathfrak{I}(\zeta (x-\zeta_0\textbf{1})\right\|}_{a}\Big)
\Big({\left\|\mathfrak{R}(\xi y)\right\|}_{a} + {\left\|\mathfrak{I}(\xi (y-\xi_0\textbf{1})\right\|}_{a}\Big)
\\& \leq \Big(v_{_{a}}(\zeta x) + v_{_{a}}\big(\zeta (x - \zeta_0\textbf{1})\big)\Big)\Big(v_{_{a}}(\xi y) + v_{_{a}}\big(\xi (y - \xi_0\textbf{1})\big)\Big)
\\& = \big(v_{_{a}}(x) + d_{_{a}}(x)\big)\big(v_{_{a}}(y) + d_{_{a}}(y)\big),
\end{align*}
and so
\begin{align}\label{I.3.T.6.2}
{\|xy\|}_{a} \leq K_3.
\end{align}
Applying \eqref{I.1.T.6.2}, \eqref{I.2.T.6.2} and \eqref{I.3.T.6.2}, we arrive at
\begin{align*}
{\|xy\|}_{a} \leq \min\big\{K_1, K_2, K_3\big\}.
\end{align*}
\end{proof}
\section{Additional results}
Recall that a character on a commutative $C^*$-algebra $\mathfrak{A}$
is a non-zero homomorphism $\varphi: \mathfrak{A} \rightarrow \mathbb{C}$.
We denote by $\widehat{\mathfrak{A}}$ the set of characters on $\mathfrak{A}$.
Following \cite{bakloutiNamouri, FekiAn2020}, if $x$ is an element of ${\mathfrak{A}}_{a}$, its $a$-spectral radius is defined to be
\begin{align*}
r_{a}(x) = \displaystyle{\lim_{n\rightarrow +\infty}}{\|x^{n}\|}^{1/n}_{a}.
\end{align*}
The study of the spectral radius of operators received considerable
attention in the last decades. The reader may consult \cite{Kula.Wysoczanski, Mu, Singh} and the references therein.
We first prove two lemmas that we need in what follows.
\begin{lemma}\label{L.04.18}
Let $\mathfrak{A}$ be a commutative $C^*$-algebra and let $x\in{\mathfrak{A}}_{a}$. Then
\begin{align*}
{\|x\|}_{a} = v_{a}(x) = \sup\big\{|\varphi(x)|:\, \varphi\in \widehat{\mathfrak{A}}, \varphi(a) \neq 0\big\}.
\end{align*}
In particular $r_{a}(x) = {\|x\|}_{a} = {\|x\|}_{a^{1/2}}$ and $v_{a}(x) = v_{a^{1/2}}(x)$ for any $x\in \mathfrak{A}^a$.
\end{lemma}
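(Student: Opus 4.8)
The plan is to pass to the Gelfand representation. Since $\mathfrak{A}$ is a commutative unital $C^*$-algebra, the Gelfand--Naimark theorem identifies $\mathfrak{A}$ with $C(\widehat{\mathfrak{A}})$, where $\widehat{\mathfrak{A}}$ is compact and Hausdorff, via $x\mapsto\widehat{x}$ with $\widehat{x}(\varphi)=\varphi(x)$; under this identification an element is positive exactly when its transform is pointwise nonnegative, and in particular $\widehat{a}\geq 0$. Because $\mathfrak{A}$ is commutative every element lies in the center, so $\mathfrak{A}_{a}=\mathfrak{A}^{a}=\mathfrak{A}$ and $x^{\sharp_{a}}=x^{*}$ is an $a$-adjoint of $x$. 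Write $S:=\sup\{|\varphi(x)|:\varphi\in\widehat{\mathfrak{A}},\,\varphi(a)\neq 0\}$, which is finite since $\widehat{x}$ is continuous on the compact set $\widehat{\mathfrak{A}}$.

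First I would extract the lower bounds directly from the characters. Fix $\varphi\in\widehat{\mathfrak{A}}$ with $\varphi(a)\neq 0$. Since characters are states and $a\geq 0$ forces $\varphi(a)>0$, the functional $f_{\varphi}:=\varphi/\varphi(a)$ belongs to $\mathcal{S}_{a}(\mathfrak{A})$. Using that $\varphi$ is multiplicative and $\varphi(x^{*})=\overline{\varphi(x)}$, one computes $f_{\varphi}(x^{*}ax)=|\varphi(x)|^{2}$ and $f_{\varphi}(ax)=\varphi(x)$, whence $\|x\|_{a}\geq|\varphi(x)|$ and $v_{a}(x)\geq|\varphi(x)|$. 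Taking the supremum over all admissible $\varphi$ gives $\|x\|_{a}\geq S$ and $v_{a}(x)\geq S$.

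Next I would obtain the matching upper bound for $\|x\|_{a}$ by a positivity argument. The Gelfand transform of $S^{2}a-x^{*}ax$ equals $\widehat{a}\,(S^{2}-|\widehat{x}|^{2})$, which is nonnegative on all of $\widehat{\mathfrak{A}}$: it vanishes where $\widehat{a}=0$, while where $\widehat{a}>0$ the definition of $S$ gives $|\widehat{x}|\leq S$. Hence $S^{2}a-x^{*}ax\geq 0$, so $g(x^{*}ax)\leq S^{2}g(a)$ for every state $g$ with $g(a)\neq 0$; dividing by $g(a)$ and taking the supremum yields $\|x\|_{a}\leq S$. Combined with the previous step this gives $\|x\|_{a}=S$, and then the already-established inequality \eqref{I.000009} forces $S\leq v_{a}(x)\leq\|x\|_{a}=S$, so $v_{a}(x)=\|x\|_{a}=S$, which is the main equality.

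Finally, for the ``in particular'' clause I would use that $\varphi(x^{n})=\varphi(x)^{n}$, so applying the main equality to $x^{n}$ gives $\|x^{n}\|_{a}=\sup\{|\varphi(x)|^{n}:\varphi(a)\neq 0\}=S^{n}$, whence $r_{a}(x)=\lim_{n}\|x^{n}\|_{a}^{1/n}=S=\|x\|_{a}$. Applying the main equality with the positive element $a^{1/2}$ in place of $a$, and using $\varphi(a^{1/2})=\sqrt{\varphi(a)}$ so that $\varphi(a^{1/2})\neq 0$ if and only if $\varphi(a)\neq 0$, gives $\|x\|_{a^{1/2}}=S=\|x\|_{a}$ and likewise $v_{a^{1/2}}(x)=S=v_{a}(x)$. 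The only step demanding genuine care is the global nonnegativity of $\widehat{a}\,(S^{2}-|\widehat{x}|^{2})$ across the zero set of $\widehat{a}$; everything else is formal once the Gelfand picture is in place, the crucial simplification being that the extremizing states may be taken to be the characters, so no measure-theoretic work is needed.
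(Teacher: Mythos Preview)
Your proof is correct, and the overall architecture (lower bound from characters, then an upper bound for $\|x\|_a$) matches the paper's. The substantive difference is in how the upper bound $\|x\|_a\leq S$ is obtained. The paper argues by approximation: it takes an arbitrary state $f$, invokes the fact that the state space of a commutative unital $C^*$-algebra is the weak-$*$ closed convex hull of $\widehat{\mathfrak{A}}$, writes $f$ as a weak-$*$ limit of finite convex combinations $f_\alpha=\sum_k\lambda_k\varphi_k$, verifies $f_\alpha(x^*ax)=\sum_k\lambda_k\varphi_k(a)|\varphi_k(x)|^2\leq \beta^2 f_\alpha(a)$ directly, and then passes to the limit. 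Your route is more intrinsic: via the Gelfand transform you establish the \emph{operator} inequality $x^*ax\leq S^2 a$ by checking nonnegativity of $\widehat{a}\,(S^2-|\widehat{x}|^2)$ pointwise (trivial on the zero set of $\widehat{a}$, and forced by the definition of $S$ elsewhere), after which $g(x^*ax)\leq S^2 g(a)$ follows for every state at once with no limiting argument. Both methods rest on the same commutative structure, but yours avoids the convex-combination approximation and is arguably cleaner; it also makes the single delicate point (behavior on the zero set of $\widehat{a}$) explicit. Finally, you spell out the ``in particular'' clause via $\varphi(x^n)=\varphi(x)^n$ and $\varphi(a^{1/2})=\sqrt{\varphi(a)}$, which the paper's proof leaves to the reader.
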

\begin{proof}
Set $\beta=\sup\big\{|\varphi(x)|:\, \varphi\in \widehat{\mathfrak{A}}, \varphi(a) \neq 0\big\}$.
It is clear that $\beta\leq v_{a}(x)\leq {\|x\|}_{a}$.
Pick up an element $f$ in ${\mathcal{S}}(\mathfrak{A})$.
By Theorem~5.1.6 and Corollary~5.1.10 of \cite{Mu} there exists a net
$(f_\alpha)_{\alpha\in\Lambda}\in{\rm conv}(\widehat{\mathfrak{A}})$ that weakly converges to $f$.
For each $\alpha\in\Lambda$, there are a positive integer $n_\alpha$, positive scalars $(\lambda_k)_{1\leq k\leq n_\alpha}$
and characters $(\varphi_k)_{1\leq k\leq n}\subseteq\widehat{\mathfrak{A}}$ such that $\sum_{k=1}^n\lambda_k =1$
and $f_\alpha=\sum_{k=1}^n\lambda_k \varphi_k$.
For any $\alpha\in\Lambda$, we have
\begin{align*}
f_\alpha(x^*ax)=\sum_{k=1}^n\lambda_k \varphi_k(x^*ax)
= \sum_{k=1}^n\lambda_k \varphi_k(a)|\varphi_k(x)|^2 \leq \beta^2 \sum_{k=1}^n\lambda_k \varphi_k(a) = \beta^2f_\alpha(a).
\end{align*}
Thus
\begin{align*}
f_\alpha(x^*ax)\leq \beta^2f_\alpha(a).
\end{align*}
Taking the limit in the above inequality, yields that $f(x^*ax)\leq \beta^2\ f(a)$ for all $f\in{\mathcal{S}}(\mathfrak{A})$.
This shows that ${\|x\|}_{a}\leq \beta$, and ends the proof.
\end{proof}
We shall need also the following elementary observation about the $a$-spectral radius.
\begin{lemma}\label{L.04.181}
Let $\mathfrak{A}$ be a $C^*$-algebra and let $x, y\in{\mathfrak{A}}_{a}$. Then $r_{a}(xy)=r_{a}(yx)$.
\end{lemma}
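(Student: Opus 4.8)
The plan is to adapt the classical Banach-algebra argument showing that the spectral radius is invariant under cyclic permutation of a product; the only ingredient needed is the submultiplicativity of the $a$-operator semi-norm recorded in the Introduction, namely ${\|uv\|}_{a} \leq {\|u\|}_{a}{\|v\|}_{a}$ for $u, v \in \mathfrak{A}^{a}$ (see \cite[Proposition~3.3]{B.M.Positivity}). Since $\mathfrak{A}_{a}$ is a subalgebra of $\mathfrak{A}$, the products $xy$ and $yx$, together with all of their powers, again lie in $\mathfrak{A}_{a}$, so the limits defining $r_{a}(xy)$ and $r_{a}(yx)$ both exist; indeed submultiplicativity makes $n \mapsto \log {\|z^{n}\|}_{a}$ subadditive, which is exactly what guarantees convergence of ${\|z^{n}\|}_{a}^{1/n}$.

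The key observation is the elementary identity $(xy)^{n+1} = x(yx)^{n} y$, valid for every $n \geq 1$. Combined with submultiplicativity it yields
\[
{\|(xy)^{n+1}\|}_{a} = {\|x(yx)^{n} y\|}_{a} \leq {\|x\|}_{a}\,{\|(yx)^{n}\|}_{a}\,{\|y\|}_{a}.
\]
Taking $(n+1)$-th roots and writing ${\|(yx)^{n}\|}_{a}^{1/(n+1)} = \big({\|(yx)^{n}\|}_{a}^{1/n}\big)^{n/(n+1)}$, I would let $n \to \infty$: the factor $\big({\|x\|}_{a}{\|y\|}_{a}\big)^{1/(n+1)}$ tends to $1$, the exponent $n/(n+1)$ tends to $1$, and ${\|(yx)^{n}\|}_{a}^{1/n} \to r_{a}(yx)$. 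Hence $r_{a}(xy) \leq r_{a}(yx)$. Interchanging the roles of $x$ and $y$ (using $(yx)^{n+1} = y(xy)^{n} x$) gives the reverse inequality $r_{a}(yx) \leq r_{a}(xy)$, and the two combine to the claimed equality.

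I do not anticipate a genuine obstacle: the argument is purely formal once submultiplicativity is in hand. The only points that deserve a word of care are the existence of the defining limits (handled by the subadditivity remark above) and the behaviour of the estimate when the semi-norm degenerates. If, for instance, ${\|y\|}_{a} = 0$, then ${\|xy\|}_{a} \leq {\|x\|}_{a}{\|y\|}_{a} = 0$ forces $r_{a}(xy) = 0 = r_{a}(yx)$; but this is automatically absorbed into the same chain of submultiplicative inequalities, so no separate case analysis is required.
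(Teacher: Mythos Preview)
Your argument is correct and is essentially identical to the paper's: both use the identity $(xy)^{n+1}=x(yx)^{n}y$ (the paper writes it as $(xy)^{n}=x(yx)^{n-1}y$), apply submultiplicativity of ${\|\cdot\|}_{a}$, take $n$-th roots, and pass to the limit to get $r_{a}(xy)\leq r_{a}(yx)$, then swap $x$ and $y$. Your additional remarks on the existence of the limit and the degenerate case ${\|y\|}_{a}=0$ are fine extra clarifications that the paper does not spell out.
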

\begin{proof}
For any $n \in \mathbb{N}$, we have
\begin{align*}
{\big\|(xy)^{n}\big\|}^{1/n}_{a} = {\big\|x(yx)^{n-1}y\big\|}^{1/n}_{a} \leq {\|x\|}^{1/n}_{a}{\big\|(yx)^{n-1}\big\|}^{1/n}_{a}{\|y\|}^{1/n}_{a}.
\end{align*}
Thus
\begin{align*}
{\big\|(xy)^{n}\big\|}^{1/n}_{a} \leq {\|x\|}^{1/n}_{a}\Big({\big\|(yx)^{n-1}\big\|}^{1/{n-1}}_{a}\Big)^{(n-1)/n}{\|y\|}^{1/n}_{a}.
\end{align*}
Taking the limit when $n$ tends to $\infty$ in the above inequality, we infer that $r_{a}(xy)\leq r_{a}(yx)$.
In a similar way we can prove that $r_{a}(xy)\geq r_{a}(yx)$.
\end{proof}
The next result provides a version of \cite[Lemma~3.3]{D-C} for the seminorm ${\|\!\cdot\!\|}_{a}$.
\begin{theorem}\label{T.04.01}
Let $\mathfrak{A}$ be a $C^*$-algebra and let $x$ and $y$ be $a$-self-adjoint elements in $\mathfrak{A}$
such that ${\|y\|}_{a}\leq {\|x\|}_{a} \leq 1$. Then ${\|x+y\|}_{a}\leq 1+2{\|xy\|}_{a}$.
\end{theorem}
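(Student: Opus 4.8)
The plan is to exploit that squaring converts the $a$-operator seminorm of an $a$-self-adjoint element into the seminorm of its square, and then to peel off the term $xy+yx$, whose size is controlled by ${\|xy\|}_{a}$.

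First I would do the $a$-self-adjointness bookkeeping. Since $x$ and $y$ are $a$-self-adjoint, $x^{\sharp_{a}}=x$ and $y^{\sharp_{a}}=y$, so $x+y$ is $a$-self-adjoint and, using $(uv)^{\sharp_{a}}=v^{\sharp_{a}}u^{\sharp_{a}}$, also $(xy)^{\sharp_{a}}=yx$. Applying \eqref{I.00000} to the $a$-self-adjoint element $x+y$ gives ${\|x+y\|}_{a}^{2}={\|(x+y)(x+y)^{\sharp_{a}}\|}_{a}={\|(x+y)^{2}\|}_{a}$, while applying \eqref{I.00000} to $xy$ gives ${\|yx\|}_{a}={\|(xy)^{\sharp_{a}}\|}_{a}={\|xy\|}_{a}$. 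Expanding $(x+y)^{2}=x^{2}+y^{2}+(xy+yx)$ and using the triangle inequality for ${\|\cdot\|}_{a}$ then yields
\[
{\|x+y\|}_{a}^{2}\leq {\|x^{2}+y^{2}\|}_{a}+{\|xy+yx\|}_{a}\leq {\|x^{2}+y^{2}\|}_{a}+2{\|xy\|}_{a}.
\]

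The crux is a sharp estimate of ${\|x^{2}+y^{2}\|}_{a}$, for which the crude bound ${\|x^{2}\|}_{a}+{\|y^{2}\|}_{a}\leq 2$ is too lossy: one must see that $x^{2}$ and $y^{2}$ overlap only to the extent that $xy$ is large. I would obtain this via an off-diagonal operator-matrix identity. Working in $M_{2}(\mathfrak{A})$ with the positive element $\mathrm{diag}(a,a)$ and $U=\begin{pmatrix} x & 0 \\ y & 0\end{pmatrix}$, one computes $U^{\sharp}=\begin{pmatrix} x & y \\ 0 & 0\end{pmatrix}$, so $U^{\sharp}U=\mathrm{diag}(x^{2}+y^{2},0)$ while $UU^{\sharp}=\begin{pmatrix} x^{2} & xy \\ yx & y^{2}\end{pmatrix}$. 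The analogue of \eqref{I.00000} in $M_{2}(\mathfrak{A})$ equates the two seminorms; splitting the second matrix into its diagonal and its $\mathrm{diag}(a,a)$-self-adjoint off-diagonal part gives
\[
{\|x^{2}+y^{2}\|}_{a}\leq \max\{{\|x^{2}\|}_{a},{\|y^{2}\|}_{a}\}+{\|xy\|}_{a}={\|x\|}_{a}^{2}+{\|xy\|}_{a}\leq 1+{\|xy\|}_{a},
\]
where the off-diagonal seminorm is again evaluated by \eqref{I.00000}, since squaring it produces $\mathrm{diag}\big((xy)(xy)^{\sharp_{a}},(yx)(yx)^{\sharp_{a}}\big)$ of seminorm ${\|xy\|}_{a}^{2}$. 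Equivalently, after a Gelfand--Naimark representation one passes to the semi-inner-product space attached to $a$, where $x,y$ act as genuine self-adjoint contractions and this is the classical bound ${\|S^{2}+T^{2}\|}\leq{\|S\|}^{2}+{\|ST\|}$.

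Finally I would assemble the pieces: substituting the last display into the earlier one gives ${\|x+y\|}_{a}^{2}\leq 1+3{\|xy\|}_{a}$, and since ${\|xy\|}_{a}\geq 0$ we have $1+3{\|xy\|}_{a}\leq 1+4{\|xy\|}_{a}+4{\|xy\|}_{a}^{2}=(1+2{\|xy\|}_{a})^{2}$, whence ${\|x+y\|}_{a}\leq 1+2{\|xy\|}_{a}$. The main obstacle is exactly the middle step: the constant term of the bound on ${\|x^{2}+y^{2}\|}_{a}$ must be precisely ${\|x\|}_{a}^{2}\leq 1$ and not something like $\sqrt{2}$ (an iterated submultiplicative estimate only yields ${\|x^{2}+y^{2}\|}_{a}\leq\sqrt{2+2{\|xy\|}_{a}}$, which is useless for small ${\|xy\|}_{a}$), so one genuinely needs the operator-matrix realization together with \eqref{I.00000}. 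The hypothesis ${\|y\|}_{a}\leq{\|x\|}_{a}\leq 1$ enters only through $\max\{{\|x^{2}\|}_{a},{\|y^{2}\|}_{a}\}={\|x\|}_{a}^{2}\leq 1$.
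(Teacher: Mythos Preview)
Your proof is correct and takes a genuinely different route from the paper. The paper argues by a bootstrap: letting $\kappa$ be the least constant such that ${\|b+c\|}_{a}\leq 1+\kappa$ for all $a$-self-adjoint $b,c$ with ${\|c\|}_{a}\leq{\|b\|}_{a}\leq 1$ and ${\|bc\|}_{a}\leq{\|xy\|}_{a}$, one applies the basic inequality ${\|b+c\|}_{a}^{2}\leq{\|b^{2}+c^{2}\|}_{a}+2{\|bc\|}_{a}$ and observes that the pair $(b^{2},c^{2})$ again lies in this class, since ${\|b^{2}c^{2}\|}_{a}\leq{\|b\|}_{a}{\|bc\|}_{a}{\|c\|}_{a}\leq{\|xy\|}_{a}$; this yields ${\|b+c\|}_{a}\leq\sqrt{1+2\kappa}<1+\kappa$ whenever $\kappa>2{\|xy\|}_{a}$, contradicting minimality. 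Your direct operator-matrix argument (or its semi-Hilbertian reformulation) replaces this iteration with the single sharp estimate ${\|x^{2}+y^{2}\|}_{a}\leq{\|x\|}_{a}^{2}+{\|xy\|}_{a}$, and in fact gives the stronger intermediate bound ${\|x+y\|}_{a}\leq\sqrt{1+3{\|xy\|}_{a}}$ before you discard it. The trade-off is that the paper's argument stays entirely inside $\mathfrak{A}$ and uses only \eqref{I.00000}, the triangle inequality and submultiplicativity of ${\|\cdot\|}_{a}$, whereas your route needs the $\mathrm{diag}(a,a)$-seminorm on $M_{2}(\mathfrak{A})$ (specifically, that \eqref{I.00000} carries over, that diagonal blocks have seminorm equal to the maximum of the block seminorms, and the off-diagonal seminorm computation). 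These facts are true, and your Gelfand--Naimark / semi-Hilbertian remark is the cleanest way to see them, but they are not developed in the paper, so a complete write-up along your lines would need a few lines to justify them.
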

\begin{proof}
Firstly, note that $x$ and $y$ are in $\mathfrak{A}_{a}$.
Now, let $\kappa$ be the smallest number such that ${\|b+c\|}_{a}\leq 1+\kappa$
for all $a$-self-adjoint elements $b$ and $c$ in $\mathfrak{A}$, where ${\|c\|}_{a}\leq {\|b\|}_{a} \leq 1$ and ${\|bc\|}_{a}\leq {\|xy\|}_{a}$.
We claim that $\kappa\leq 2{\|xy\|}_{a}$. Suppose the contrary that $\kappa> 2{\|xy\|}_{a}$.
Note that $\kappa$ exists since $x, y\in\mathfrak{A}_{a}$.
Pick two $a$-self-adjoint elements $b$ and $c$ in $\mathfrak{A}$.
Then by \eqref{I.00000} and the triangle inequality, we have
\begin{align}\label{T.I.1.04.01}
{\|b+c\|}^2_{a} = {\|(b+c)^2\|}_{a} \leq {\|b^2+c^2\|}_{a} + 2{\|bc\|}_{a}.
\end{align}
It is easy to see that $b^2$ and $c^2$ are $a$-self-adjoint and so by \eqref{I.00000} we have
\begin{align*}
{\|c^2\|}_{a} = {\|c\|}^2_{a} \leq {\|b\|}^2_{a} = {\|b^2\|}_{a} \leq 1,
\end{align*}
and
\begin{align*}
{\|b^2c^2\|}_{a} = {\|b(bc)c\|}_{a} \leq  {\|b\|}_{a}{\|bc\|}_{a}{\|c\|}_{a} \leq {\|bc\|}_{a} \leq {\|xy\|}_{a}.
\end{align*}
Therefore, ${\|b^2+c^2\|}_{a}\leq 1+\kappa$
and by \eqref{T.I.1.04.01} it follows that
\begin{align*}
{\|b + c\|}_{a} \leq \sqrt{1 + 2\kappa} < 1 + \kappa,
\end{align*}
for all $a$-self-adjoint elements $b$ and $c$ in $\mathfrak{A}$ such that ${\|c\|}_{a}\leq {\|b\|}_{a} \leq 1$ and ${\|bc\|}_{a}\leq {\|xy\|}_{a}$.
This is a contradiction and hence $\kappa\leq 2{\|xy\|}_{a}$ as claimed.
\end{proof}
As an immediate consequence of Theorem \ref{T.04.01}, we get the following result.
\begin{corollary}\label{C.04.02}
Let $\mathfrak{A}$ be a $C^*$-algebra and let $x$ and $y$ be $a$-self-adjoint elements in $\mathfrak{A}$
such that ${\|y\|}_{a}\leq {\|x\|}_{a}$ and ${\|x\|}_{a} \neq 0$. Then ${\|x+y\|}_{a}\leq {\|x\|}_{a}+2\frac{{\|xy\|}_{a}}{{\|x\|}_{a}}$.
\end{corollary}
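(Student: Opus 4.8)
The plan is to deduce the corollary from Theorem~\ref{T.04.01} by a simple normalization (rescaling) argument, so that the $a$-self-adjoint elements in play have $a$-operator semi-norm bounded by $1$. Since ${\|x\|}_{a}\neq 0$, I would set $c := {\|x\|}_{a} > 0$ and pass to the scaled elements $\widetilde{x} := \tfrac{1}{c}\,x$ and $\widetilde{y} := \tfrac{1}{c}\,y$. The first task is to verify that $\widetilde{x}$ and $\widetilde{y}$ satisfy the hypotheses of Theorem~\ref{T.04.01}. Because $c$ is a positive \emph{real} scalar, scaling preserves $a$-self-adjointness: from $ax = x^*a$ we get $a\widetilde{x} = \tfrac{1}{c}(ax) = \tfrac{1}{c}(x^*a) = \widetilde{x}^{\,*}a$, and likewise for $\widetilde{y}$. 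In particular $\widetilde{x},\widetilde{y}\in\mathfrak{A}_{a}$ (an $a$-self-adjoint element is its own $a$-adjoint), so the semi-norm ${\|\!\cdot\!\|}_{a}$ and the submultiplicativity ${\|zw\|}_{a}\leq{\|z\|}_{a}{\|w\|}_{a}$ apply to them exactly as in the theorem.

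Next I would record the effect of the scaling on the semi-norms. Using that ${\|\!\cdot\!\|}_{a}$ is a semi-norm (hence absolutely homogeneous), one has ${\|\widetilde{x}\|}_{a} = \tfrac{1}{c}{\|x\|}_{a} = 1$ and ${\|\widetilde{y}\|}_{a} = \tfrac{1}{c}{\|y\|}_{a} = \tfrac{{\|y\|}_{a}}{{\|x\|}_{a}} \leq 1$, so indeed ${\|\widetilde{y}\|}_{a}\leq{\|\widetilde{x}\|}_{a}\leq 1$. Applying Theorem~\ref{T.04.01} to $\widetilde{x}$ and $\widetilde{y}$ then yields
\begin{align*}
{\|\widetilde{x}+\widetilde{y}\|}_{a}\leq 1 + 2{\|\widetilde{x}\,\widetilde{y}\|}_{a}.
\end{align*}

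Finally I would undo the normalization. Homogeneity gives ${\|\widetilde{x}+\widetilde{y}\|}_{a} = \tfrac{1}{c}{\|x+y\|}_{a}$ and ${\|\widetilde{x}\,\widetilde{y}\|}_{a} = \tfrac{1}{c^{2}}{\|xy\|}_{a}$, so the displayed inequality reads $\tfrac{1}{c}{\|x+y\|}_{a}\leq 1 + \tfrac{2}{c^{2}}{\|xy\|}_{a}$. Multiplying through by $c = {\|x\|}_{a}$ produces ${\|x+y\|}_{a}\leq {\|x\|}_{a} + 2\tfrac{{\|xy\|}_{a}}{{\|x\|}_{a}}$, which is precisely the claim. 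I do not anticipate a genuine obstacle here, since the argument is entirely a scaling reduction; the only points that require (brief) care are confirming that multiplication by a positive real scalar preserves $a$-self-adjointness and that ${\|\!\cdot\!\|}_{a}$ is absolutely homogeneous, both of which are immediate from the definitions recalled in the preliminaries.
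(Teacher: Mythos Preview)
Your proof is correct and is exactly the normalization argument the paper has in mind when it calls the corollary ``an immediate consequence of Theorem~\ref{T.04.01}.'' The only steps are the rescaling by ${\|x\|}_{a}$ and the homogeneity of ${\|\!\cdot\!\|}_{a}$, both of which you handle cleanly.
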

\begin{remark}\label{R.04.03}
In fact, there is a smallest number $\kappa \in [0, 2]$ such that
\begin{align}\label{R.I.1.04.03}
{\|x+y\|}_{a}\leq {\|x\|}_{a}+\kappa\frac{{\|xy\|}_{a}}{{\|x\|}_{a}}
\end{align}
holds whenever $x$ and $y$ are two $a$-self-adjoint elements in $\mathfrak{A}$ with ${\|x\|}_{a} \neq 0$ and ${\|y\|}_{a}\leq {\|x\|}_{a}$.
\end{remark}
If $\mathfrak{A}$ is commutative, then $\mathfrak{A}_{a}=\mathfrak{A}^{a}=\mathfrak{A}$ and $\kappa$ may be chosen equal to $1$.
\begin{theorem}\label{T.04.19}
Let $\mathfrak{A}$ be a commutative $C^*$-algebra and let $x$ and $y$ be $a$-self-adjoint elements in $\mathfrak{A}$
such that ${\|y\|}_{a}\leq {\|x\|}_{a}$ and ${\|x\|}_{a} \neq 0$. Then
\begin{align*}
{\|x+y\|}_{a}\leq {\|x\|}_{a}+\frac{{\|xy\|}_{a}}{{\|x\|}_{a}}.
\end{align*}
\end{theorem}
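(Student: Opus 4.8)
The plan is to pass to the Gelfand spectrum and reduce the estimate to a pointwise inequality on the set of characters that do not annihilate $a$. First I would invoke Lemma \ref{L.04.18}: since $\mathfrak{A}$ is commutative we have $\mathfrak{A}^{a} = \mathfrak{A}$, so for every $z \in \mathfrak{A}$
\begin{align*}
{\|z\|}_{a} = \sup\big\{|\varphi(z)| : \varphi \in \widehat{\mathfrak{A}}, \, \varphi(a) \neq 0\big\}.
\end{align*}
Write $\Omega = \{\varphi \in \widehat{\mathfrak{A}} : \varphi(a) \neq 0\}$ and abbreviate $X = {\|x\|}_{a}$ and $P = {\|xy\|}_{a}$. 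Next I would record that $a$-self-adjointness forces the relevant character values to be real: applying a character $\varphi \in \Omega$ to $ax = x^{*}a$ and using $\varphi(x^{*}) = \overline{\varphi(x)}$ gives $\varphi(a)\big(\varphi(x) - \overline{\varphi(x)}\big) = 0$, whence $\varphi(x) \in \mathbb{R}$; likewise $\varphi(y) \in \mathbb{R}$ for every $\varphi \in \Omega$.

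The core step is an elementary pointwise inequality. Fix $\varphi \in \Omega$ and set $u = \varphi(x)$ and $v = \varphi(y)$, both real. By the displayed formula for ${\|\cdot\|}_{a}$ we have $|u| \leq X$, $|v| \leq {\|y\|}_{a} \leq X$, and, since $\varphi$ is multiplicative, $|uv| = |\varphi(xy)| \leq P$. The two factorizations $(X - u)(X - v) \geq 0$ and $(X + u)(X + v) \geq 0$ — both valid because $|u| \leq X$ and $|v| \leq X$ make all four factors nonnegative — expand to $X(u+v) \leq X^{2} + uv$ and $-X(u+v) \leq X^{2} + uv$, respectively. Combining these two one-sided bounds gives $X|u+v| \leq X^{2} + uv$, and dividing by $X > 0$ yields
\begin{align*}
|u + v| \leq X + \frac{uv}{X} \leq X + \frac{P}{X}.
\end{align*}

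Finally I would take the supremum over $\varphi \in \Omega$. Since $\varphi(x+y) = u + v$, the left-hand supremum is exactly ${\|x+y\|}_{a}$ by Lemma \ref{L.04.18}, while the right-hand side is the constant $X + P/X = {\|x\|}_{a} + {\|xy\|}_{a}/{\|x\|}_{a}$, which is the claimed bound. The point requiring the most care — and the one I view as the main obstacle — is the reduction itself: one must verify that commutativity indeed places $x$, $y$, and $xy$ in $\mathfrak{A}^{a}$ so that Lemma \ref{L.04.18} applies, and that $a$-self-adjointness genuinely yields real character values on $\Omega$. Once the problem has been transported to real-valued functions on $\Omega$, the symmetric factoring trick $(X \mp u)(X \mp v) \geq 0$ makes the estimate immediate and transparently explains why the constant $\kappa$ of Remark \ref{R.04.03} can be improved to $1$ in the commutative setting.
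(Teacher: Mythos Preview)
Your proof is correct and follows essentially the same route as the paper's: both arguments pass to characters via Lemma~\ref{L.04.18} and exploit the two factorizations $(\|x\|_a \mp \varphi(x))(\|x\|_a \mp \varphi(y)) \geq 0$ to bound $|\varphi(x+y)|$, then take the supremum over $\Omega$. The only cosmetic difference is that the paper packages the nonnegativity of these products as the statement that $(\|x\|_a\mathbf{1} \mp x)(\|x\|_a\mathbf{1} \mp y)$ are $a$-positive before evaluating at a character, whereas you work directly with the real scalars $u=\varphi(x)$ and $v=\varphi(y)$; your explicit verification that $a$-self-adjointness forces $\varphi(x),\varphi(y)\in\mathbb{R}$ on $\Omega$ is in fact the ingredient underpinning the paper's $a$-positivity claim, so if anything your write-up is slightly more self-contained.
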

\begin{proof}
By Lemma \ref{L.04.18}, we know that ${\|\!\cdot\!\|}_{a}={\|\!\cdot\!\|}_{a^{1/2}}$.
So if $x, y\in\mathfrak{A}$ are two $a$-self-adjoint elements such that ${\|y\|}_{a}\leq {\|x\|}_{a}$ and ${\|x\|}_{a} \neq 0$,
then $\big({\|x\|}_{a}\textbf{1}-x\big)\big({\|x\|}_{a}\textbf{1}-y\big)$ and $\big({\|x\|}_{a}\textbf{1}+x\big)\big({\|x\|}_{a}\textbf{1}+y\big)$
are also $a$-positive. Hence for every $\varphi\in\widehat{\mathfrak{A}}$ with $\varphi(a)\neq0$ we have
\begin{align*}
\varphi\Big(\big({\|x\|}_{a}\textbf{1}-x\big)\big({\|x\|}_{a}\textbf{1}-y\big)\Big)
=\frac{\varphi\Big(a\big({\|x\|}_{a}\textbf{1}-x\big)\big({\|x\|}_{a}\textbf{1}-y\big)\Big)}{\varphi(a)}\geq 0
\end{align*}
and
\begin{align*}
\varphi\Big(\big({\|x\|}_{a}\textbf{1}-x\big)\big({\|x\|}_{a}\textbf{1}-y\big)\Big)
=\frac{\varphi\Big(a\big({\|x\|}_{a}\textbf{1}-x\big)\big({\|x\|}_{a}\textbf{1}-y\big)\Big)}{\varphi(a)}\geq 0.
\end{align*}
Accordingly
\begin{align}\label{T.I.1.04.19}
{\|x\|}^2_{a} + \varphi(xy) \geq {\|x\|}_{a}\varphi(x+y) \quad \mbox{and} \quad {\|x\|}^2_{a} + \varphi(xy) \geq -{\|x\|}_{a}\varphi(x+y),
\end{align}
for any $\varphi\in\widehat{\mathfrak{A}}$ with $\varphi(a)\neq0$. Now, by Lemma \ref{L.04.18}, we see that
\begin{align*}
{\|x\|}^2_{a} + \varphi(xy) \leq {\|x\|}^2_{a} + {\|xy\|}_{a}.
\end{align*}
The above inequality together with \eqref{T.I.1.04.19} yields that,
\begin{align*}
{\|x\|}_{a} \big|\varphi(x+y)\big| \leq {\|x\|}^2_{a} + {\|xy\|}_{a},
\end{align*}
for any $\varphi\in\widehat{\mathfrak{A}}$ with $\varphi(a)\neq0$.
Taking the suppremum over all $\varphi\in\widehat{\mathfrak{A}}$ with $\varphi(a)\neq0$
and dividing by ${\|x\|}_{a}$ we get ${\|x+y\|}_{a}\leq {\|x\|}_{a}+\frac{{\|xy\|}_{a}}{{\|x\|}_{a}}$. This ends the proof.
\end{proof}
The next theorem gives a necessary condition for which the inequality
\eqref{R.I.1.04.03} holds for all elements of $\mathfrak{A}_{a}$.
\begin{theorem}\label{T.04.20}
Let $\mathfrak{A}$ be a $C^*$-algebra. If the inequality \eqref{R.I.1.04.03}
holds for any $x, y\in\mathfrak{A}_{a}$, then $axy=ayx$ for all $x, y\in\mathfrak{A}_{a}$.
In particular, if the map $L_{a}: \mathfrak{A}\rightarrow \mathfrak{A}$ given by $L_{a}(z)=az$ is injective
then $\mathfrak{A}_{a}=\mathfrak{A}$ and $\mathfrak{A}$ is commutative.
\end{theorem}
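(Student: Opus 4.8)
The plan is to convert the scalar inequality \eqref{R.I.1.04.03} into a \emph{power-multiplicativity} statement for ${\|\!\cdot\!\|}_a$, and then to invoke the classical principle that a $C^*$-algebra on which the norm squares (i.e. $\|z^2\|=\|z\|^2$ for all $z$) is commutative. As a first step I would substitute $y=x$ into \eqref{R.I.1.04.03}, which is admissible since ${\|y\|}_a={\|x\|}_a$. This yields ${\|x\|}_a^{2}\le\kappa{\|x^2\|}_a$, and since the relevant constant here is $\kappa=1$ (the commutative value of Theorem \ref{T.04.19}), together with submultiplicativity ${\|x^2\|}_a\le{\|x\|}_a^2$ this forces
\[
{\|x^2\|}_a={\|x\|}_a^2\qquad(x\in{\mathfrak A}_a),
\]
the case ${\|x\|}_a=0$ being trivial. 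Applying this to $x^{2^{n-1}}$ and inducting gives ${\|x^{2^n}\|}_a={\|x\|}_a^{2^n}$, so $r_a(x)={\|x\|}_a$ for every $x\in{\mathfrak A}_a$; feeding this into Lemma \ref{L.04.181} already produces the symmetric identity ${\|xy\|}_a=r_a(xy)=r_a(yx)={\|yx\|}_a$.

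To exploit the squaring identity I would pass to an honest $C^*$-algebra. Represent $\mathfrak A\subseteq\mathbb B(\mathcal H)$ by Gelfand--Naimark and set $\mathcal R=\overline{a^{1/2}\mathcal H}$. As recorded in Remark \ref{R.0.509}, ${\|x\|}_a$ is the operator (semi)norm of $x$ for the semi-inner product $\langle\xi,\eta\rangle_a=\langle a\xi,\eta\rangle$. For $x\in{\mathfrak A}_a$ the formula $\widetilde x\,(a^{1/2}\xi)=a^{1/2}x\xi$ defines a bounded operator on $\mathcal R$; well-definedness is exactly where $a$-adjointability enters, since $a\zeta=0$ gives $\langle ax\zeta,w\rangle=\langle a\zeta,x^{\sharp_a}w\rangle=0$, i.e. $x(\ker a)\subseteq\ker a$. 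A direct computation gives $\|\widetilde x\|={\|x\|}_a$, $\widetilde{xy}=\widetilde x\,\widetilde y$, and $(\widetilde x)^{*}=\widetilde{x^{\sharp_a}}$; using that $x^{\sharp_a}\in{\mathfrak A}_a$ (it has $x$ itself as an $a$-adjoint, cf. \eqref{I.00000}), the image $\{\widetilde x:x\in{\mathfrak A}_a\}$ is a self-adjoint subalgebra of $\mathbb B(\mathcal R)$, whose norm closure $\mathfrak B$ is a genuine $C^*$-algebra.

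By the first paragraph, $\|\widetilde x^{\,2}\|=\|\widetilde x\|^{2}$ on the dense subalgebra, and since $T\mapsto\|T^2\|$ and $T\mapsto\|T\|^2$ are continuous, $\|T^2\|=\|T\|^2$ for every $T\in\mathfrak B$; the well-known commutativity criterion then makes $\mathfrak B$ abelian, so $\widetilde{xy}=\widetilde x\,\widetilde y=\widetilde y\,\widetilde x=\widetilde{yx}$. Unwinding, $a^{1/2}(xy-yx)\xi=0$ for all $\xi$, whence $a(xy-yx)=0$, i.e. $axy=ayx$ for all $x,y\in{\mathfrak A}_a$. For the ``in particular'' clause, injectivity of $L_a$ makes ${\|\!\cdot\!\|}_a$ a norm, so $a(xy-yx)=0$ improves to $xy=yx$ on ${\mathfrak A}_a$; once $\mathfrak A$ is seen to be commutative, every $x$ is $a$-adjointable with $x^{\sharp_a}=x^*$, giving ${\mathfrak A}_a=\mathfrak A$.

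I expect the main obstacle to be twofold. The heavier technical point is the bookkeeping of the second paragraph---checking that $x\mapsto\widetilde x$ is a well-defined, isometric, $\ast$-preserving homomorphism onto a self-adjoint algebra, and cleanly transferring the squaring identity to the closure---which is standard in the $A$-operator formalism but must be handled carefully because ${\|\!\cdot\!\|}_a$ is only a seminorm and $a$-adjoints need not be unique. The more delicate conceptual step is the promotion in the ``in particular'' clause from commutativity of ${\mathfrak A}_a$ to commutativity of all of $\mathfrak A$ (and hence ${\mathfrak A}_a=\mathfrak A$): since ${\mathfrak A}_a$ is in general neither closed nor dense, this requires showing that injectivity of $L_a$ forces $a$ to be non-degenerate enough that every element of $\mathfrak A$ becomes $a$-adjointable.
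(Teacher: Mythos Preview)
Your argument has a genuine gap at the opening move: you assert that ``the relevant constant here is $\kappa=1$'', but the hypothesis of Theorem~\ref{T.04.20} is only that inequality~\eqref{R.I.1.04.03} holds for all $x,y\in\mathfrak{A}_a$ with the constant $\kappa$ of Remark~\ref{R.04.03}, which is merely known to lie in $[0,2]$. The value $\kappa=1$ is what Theorem~\ref{T.04.19} establishes \emph{assuming} $\mathfrak{A}$ is commutative---precisely the conclusion you are after---so invoking it here is circular. With a general $\kappa\le 2$, the substitution $y=x$ gives only ${\|x\|}_a^{2}\le\kappa{\|x^2\|}_a$, which does \emph{not} yield the squaring identity ${\|x^2\|}_a={\|x\|}_a^{2}$; consequently the transfer of that identity to the represented $C^*$-algebra $\mathfrak{B}$, and the appeal to the ``norm squares $\Rightarrow$ commutative'' criterion, both collapse.

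Your representation $x\mapsto\widetilde{x}$ on $\overline{a^{1/2}\mathcal H}$ is set up correctly, and the route can be repaired: iterating ${\|x\|}_a^{2}\le\kappa{\|x^2\|}_a$ gives ${\|x\|}_a\le\kappa\,r_a(x)$ (exactly as in the paper's~\eqref{T.I.2.04.20}), hence $\|\widetilde{x}\|\le\kappa\,r(\widetilde{x})$ on the dense image; were $\mathfrak{B}$ noncommutative it would contain a nonzero $T$ with $T^{2}=0$, and approximating $T$ by $\widetilde{x}_n$ would force $r(\widetilde{x}_n)^{2}\le\|\widetilde{x}_n^{2}\|\to 0$, hence $\|\widetilde{x}_n\|\to 0$, a contradiction. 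The paper takes a quite different and more direct path that avoids any representation: from ${\|x\|}_a\le\kappa\,r_a(x)$, inequality~\eqref{T.I.1.04.20}, and Lemma~\ref{L.04.181} it bounds the entire function $\lambda\mapsto a^{1/2}e^{\lambda x}ye^{-\lambda x}$ uniformly in $\lambda\in\mathbb{C}$, and Liouville's theorem then gives $ae^{\lambda x}y=aye^{\lambda x}$, whence $axy=ayx$ by comparing first-order terms.
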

\begin{proof}
Firstly, note that if $z\in\mathfrak{A}_{a}$ then by the proof of \cite[Proposition~3.3]{B.M.Positivity}
$f(z^*az)\leq {\|z\|}^2_{a} f(a)$ for any $f\in{\mathcal{S}}(\mathfrak{A})$.
Accordingly
\begin{align}\label{T.I.1.04.20}
\big\|a^{1/2}z\big\|\leq {\|z\|}_{a}\big\|a^{1/2}\big\|, \quad (z\in\mathfrak{A}_{a}).
\end{align}
Now, assume that the inequality \eqref{R.I.1.04.03}
holds for any $x, y\in\mathfrak{A}_{a}$.
In particular, by setting $y=x$ in \eqref{R.I.1.04.03}, we obtain ${\|x\|}^2_{a}\leq \kappa{\|x^2\|}_{a}$,
and so by induction ${\|x\|}^{2^n}_{a}\leq \kappa^{2^n-1}{\|x^{2^n}\|}_{a}$.
Therefore
\begin{align}\label{T.I.2.04.20}
{\|x\|}_{a} \leq \kappa\,r{_a}(x).
\end{align}
For any complex $\lambda$, set $h(\lambda):=e^{\lambda x}ye^{-\lambda x}$.
Note that if $z\in\mathfrak{A}_{a}$ then $e^{\lambda z}\in\mathfrak{A}_{a}$ for every $\lambda \in \mathbb{C}$.
This is because ${\|e^{\lambda z}\|}_{a}\leq e^{{\|\lambda z\|}_{a}}$.
Keeping in mind inequalities \eqref{T.I.1.04.20} and \eqref{T.I.2.04.20}, it yields by Lemma \ref{L.04.181} that
\begin{align*}
\big\|a^{1/2}h(\lambda)\big\|\leq \kappa\,r{_a}(y)\big\|a^{1/2}\big\|,
\end{align*}
for any complex $\lambda$.
By Liouville's theorem (see \cite[Theorem~3]{Singh}) it yields $ae^{\lambda x} y=aye^{\lambda x}$.
By expanding the first few terms of the above identity, we infer that $axy=ayx$.
\end{proof}
Analogously to the usual numerical index (see \cite[pp.~43-44]{bonsall_duncan_1973}),
we define $a$-numerical index of $\mathfrak{A}$ by the number
\begin{align*}
n_{a}(\mathfrak{A}) = \inf\big\{v_{a}(x): \,x\in\mathfrak{A}_{a}, {\|x\|}_{a}= 1\big\}.
\end{align*}
It is clear from \eqref{I.000009} that $\frac{1}{2}\leq n_{a}(\mathfrak{A})\leq 1$.
We can state the following.
\begin{theorem}\label{T.04.21}
Let $\mathfrak{A}$ be a $C^*$-algebra. The following statements hold.
\begin{itemize}
\item[(i)] If the algebra $\mathfrak{A}_{a}$ is commutative then $n_{a}(\mathfrak{A})=1$.
\item[(ii)] If $\mathfrak{A}$ is not commutative and $a$ is invertible then $n_{a}(\mathfrak{A}) = \frac{1}{2}$.
\item[(iii)] If there exists $x\in\mathfrak{A}_{a}$
such that $ax\neq 0$ and $ax^2=0$ then $n_{a}(\mathfrak{A}) = \frac{1}{2}$.
\end{itemize}
\end{theorem}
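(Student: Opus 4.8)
The plan is to establish the three parts separately, in each case exhibiting elements that pin down the infimum defining $n_a(\mathfrak{A})$, and to exploit the identities already proved in the excerpt. Recall that $\tfrac12 \le n_a(\mathfrak{A}) \le 1$ always holds by \eqref{I.000009}, so for (ii) and (iii) it suffices to produce a single element $x$ with ${\|x\|}_a = 1$ and $v_a(x) = \tfrac12$, while for (i) it suffices to show $v_a(x) = {\|x\|}_a$ for every $x \in \mathfrak{A}_a$.

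For part (i), I would invoke Lemma~\ref{L.04.18} directly. If $\mathfrak{A}_a$ is commutative, then the relevant restriction of $\|\cdot\|_a$ behaves as in the commutative setting, and Lemma~\ref{L.04.18} gives ${\|x\|}_a = v_a(x)$ for each $x$; taking the infimum over $\{{\|x\|}_a = 1\}$ then yields $n_a(\mathfrak{A}) = 1$. The one point requiring care is that Lemma~\ref{L.04.18} is stated for commutative $\mathfrak{A}$ rather than for commutative $\mathfrak{A}_a$, so I would argue that the equality $v_a(x) = {\|x\|}_a$ depends only on the commutativity of the subalgebra generated by $x$ and $x^{\sharp_a}$ together with $a$, which is what drives the proof of the lemma.

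For part (iii), I would apply Corollary~\ref{C.0.59} (equivalently Remark~\ref{R.0.509}): if $ax \ne 0$ and $ax^2 = 0$, then $v_a(x) = \tfrac12 {\|x\|}_a$, and since $ax \ne 0$ forces ${\|x\|}_a \ne 0$, after normalizing to ${\|x\|}_a = 1$ this element witnesses $n_a(\mathfrak{A}) \le \tfrac12$; combined with the universal lower bound $n_a(\mathfrak{A}) \ge \tfrac12$ we get equality. For part (ii), where $a$ is invertible and $\mathfrak{A}$ is noncommutative, the strategy is to reduce to part (iii) by constructing a nonzero element $x$ with $x^2 = 0$ (which, since $a$ is invertible, is equivalent to $ax^2 = 0$ with $ax \ne 0$). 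The existence of such a square-zero element in a noncommutative $C^*$-algebra is the crux: I would find two elements that fail to commute, pass to a noncommutative $2$-dimensional piece, and produce a nilpotent of order two there—concretely, a noncommutative $C^*$-algebra contains a copy of $M_2(\mathbb{C})$ or at least an element behaving like the off-diagonal matrix unit $E_{12}$, which squares to zero.

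The main obstacle I expect is part (ii): manufacturing a genuine square-zero element from mere noncommutativity. Noncommutativity gives self-adjoint $h, k$ with $hk \ne kh$, but turning this into an honest nilpotent requires locating a suitable subalgebra. The cleanest route is via the Gelfand--Naimark representation used in Remark~\ref{R.0.509}, representing $\mathfrak{A} \subseteq \mathbb{B}(\mathcal{H})$ and using that a noncommutative $C^*$-algebra has an irreducible representation of dimension at least $2$, inside which one can select vectors $\xi \perp \eta$ and build a rank-one-type element acting as $\eta \mapsto \xi$, $\xi \mapsto 0$; its square is zero. Once such $x$ is in hand, invertibility of $a$ guarantees $x \in \mathfrak{A}_a$ with $ax \ne 0$ and $ax^2 = 0$, and part (iii) finishes the argument.
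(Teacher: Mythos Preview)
Your plan for (i) and (iii) matches the paper's proof exactly: the paper invokes Lemma~\ref{L.04.18} for (i) and Remark~\ref{R.0.509} for (iii). Your observation that Lemma~\ref{L.04.18} is stated only for commutative $\mathfrak{A}$, not merely commutative $\mathfrak{A}_a$, is well taken; the paper glosses over this point as well.

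For (ii) your route genuinely differs from the paper's. The paper does not construct a square-zero element at all; instead it uses the similarity identity $v_a(z)=v\bigl(a^{1/2}z\,a^{-1/2}\bigr)$, valid when $a$ is invertible, together with the classical theorem of Crabb--Duncan--McGregor \cite{crabb1974} that the ordinary numerical index of a noncommutative $C^*$-algebra equals $\tfrac12$. Your idea of reducing (ii) to (iii) via a nonzero square-zero element is also valid and has the pleasant feature of making (ii) a corollary of (iii); in fact Remark~\ref{R.04.22} of the paper records precisely the fact you need, that a $C^*$-algebra is noncommutative if and only if it contains a nonzero $x$ with $x^2=0$, and invertibility of $a$ then guarantees $x\in\mathfrak{A}_a$, $ax\neq0$, $ax^2=0$. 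One caution: your proposed construction through an irreducible representation and a rank-one map $\eta\mapsto\xi$ lives in $\mathbb{B}(\mathcal{H})$, not a priori in $\mathfrak{A}$, so that step as written does not produce the element you want; you should cite the standard characterization (as in Remark~\ref{R.04.22}) rather than attempt the construction by hand.
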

\begin{proof}
(i) It follows immediately from Lemma \ref{L.04.18}.

(ii) If $\mathfrak{A}$ is not commutative and $a$ is invertible, then by
\cite[Theorem~3]{crabb1974} and the fact that
$v_{a}(z)=v\big(a^{1/2}z(a^{1/2})^{-1}\big)$ for any $z\in\mathfrak{A}$, we have $n_{a}(\mathfrak{A}) = \frac{1}{2}$.

(iii) If there exists an element $x\in\mathfrak{A}_{a}$ such that $ax\neq 0$ and $ax^2=0$,
then by Remark \ref{R.0.509} $v_{a}(x)=\frac{1}{2}{\|x\|}_{a}$. Hence $n_{a}(\mathfrak{A}) = \frac{1}{2}$.
\end{proof}
We close this paper by the following remark.
\begin{remark}\label{R.04.22}
It well know that a $C^*$-algebra $\mathfrak{A}$ is noncommutative if and only if there exists
a nonzero element $x\in\mathfrak{A}$ such that $x^2=0$.
Such an element need not to be in $\mathfrak{A}_{a}$ or it satisfy $au=0$ in general.
To see why this take $\mathfrak{A}=\mathcal{M}_2(\mathbb{C})$, the $C^*$-algebra of all complex $2\times 2$ matrices, and let
\begin{align*}
A =\begin{bmatrix}
1 & 0\\
0 & 0
\end{bmatrix}, \, X =\begin{bmatrix}
0 & 1\\
0 & 0
\end{bmatrix}, \, Y =\begin{bmatrix}
0 & 0\\
1 & 0
\end{bmatrix}.
\end{align*}
Easy computation shows that  $X^2=Y^2=AY=0$, but $X\notin\mathfrak{A}_A$ since $v_{A}(X)=\infty$.
Also, it is well known for the numerical index, that a $C^*$-algebra $\mathfrak{A}$ is commutative
or not commutative according as $n(\mathfrak{A})$ is $1$ or $\frac{1}{2}$, see \cite[Theorem 3]{crabb1974}.
Unfortunately, this fails to be true for the $a$-numerical index in the noncommutative case.
Indeed, one can show easily that for the matrix $A$ above we have
\begin{align*}
\mathfrak{A}_{A}=\left\{\begin{bmatrix}
\alpha & 0 \\
\beta & \gamma
\end{bmatrix}: \,(\alpha, \beta, \gamma)\in\mathbb{C}^3\right\}.
\end{align*}
We see that $\mathfrak{A}_{A}$ is non commutative, non self-adjoint and
the only nilpotent matrix in $\mathfrak{A}_{A}$ is $Y$ which satisfy $AY=0$.
On the other hand we have $n_{A}(\mathfrak{A})=1$.
\end{remark}
\bibliographystyle{amsplain}

\end{document}